\documentclass[review,11pt]{elsarticle}
\textheight 8.5in
\textwidth 6in
\oddsidemargin .25in
\topmargin -1cm
\usepackage{amssymb,multirow,mathrsfs,subfigure}
\usepackage{amsmath}
\usepackage{array}
\usepackage{algorithm}
\usepackage{algorithmicx}
\usepackage[noend]{algpseudocode}
\usepackage{paralist}
\usepackage{natbib}
\usepackage{hyperref}
\usepackage{epsfig}
\usepackage{color,soul,cancel}
\numberwithin{equation}{section}
%\numberwithin{equation}{subsection}
\usepackage{mathtools,lipsum}
\usepackage[toc,page]{}
\graphicspath{{images/}{../images/}}
\usepackage{bigints,amsthm}
\allowdisplaybreaks
\newtheorem{theorem}{Theorem}[section]
\newtheorem{corollary}{Corollary}[section]

\makeatletter
\def\ps@pprintTitle{%
\let\@oddhead\@empty
\let\@evenhead\@empty
\let\@oddfoot\@empty
\let\@evenfoot\@oddfoot
}
\makeatother

\begin{document}
\begin{frontmatter}
\title{The correspondence between Voigt and Reuss bounds and the decoupling constraint in a two-grid staggered solution algorithm to coupled flow and deformation in heterogeneous poroelastic media}
\author[csm]{Saumik Dana}
\ead{saumik@utexas.edu}
\author[ita]{Joel Ita}
\ead{j.ita@shell.com}
\author[csm]{Mary. F. Wheeler}
\ead{mfw@ices.utexas.edu}

\address[csm]{Center for Subsurface Modeling, Institute for Computational Engineering and Sciences, The University of Texas at Austin, TX 78712}
\address[ita]{Shell PetroSigns Development Manager and PTE Geomechanics
3333 Hwy 6 South Houston, TX 77082}
\begin{abstract}
We perform a convergence analysis of a two-grid staggered solution algorithm for the Biot system modeling coupled flow and deformation in heterogeneous poroelastic media. The algorithm first solves the flow subproblem on a fine grid using a mixed finite element method (by freezing a certain measure of the mean stress) followed by the poromechanics subproblem on a coarse grid using a conforming Galerkin method. Restriction operators map the fine scale flow solution to the coarse scale poromechanical grid and prolongation operators map the coarse scale poromechanical solution to the fine scale flow grid. The coupling iterations are repeated until convergence and Backward Euler is employed for time marching. The analysis is based on studying the equations satisfied by the difference of iterates to show that the two-grid scheme is a contraction map under certain conditions. Those conditions are used to construct the restriction and prolongation operators as well as arrive at coarse scale elastic properties in terms of the fine scale data. We show that the adjustable parameter in the measure of the mean stress is linked to the Voigt and Reuss bounds frequently encountered in computational homogenization of multiphase composites.
\end{abstract}
\begin{keyword}
Biot system \sep Heterogeneous poroelastic medium \sep Staggered solution algorithm \sep Nested two-grid approach \sep Contraction mapping \sep Voigt and Reuss bounds
\end{keyword}
\end{frontmatter}
\section{Introduction}
\begin{figure}[h!]
\centering
\subfigure[]
{\includegraphics[scale=1]{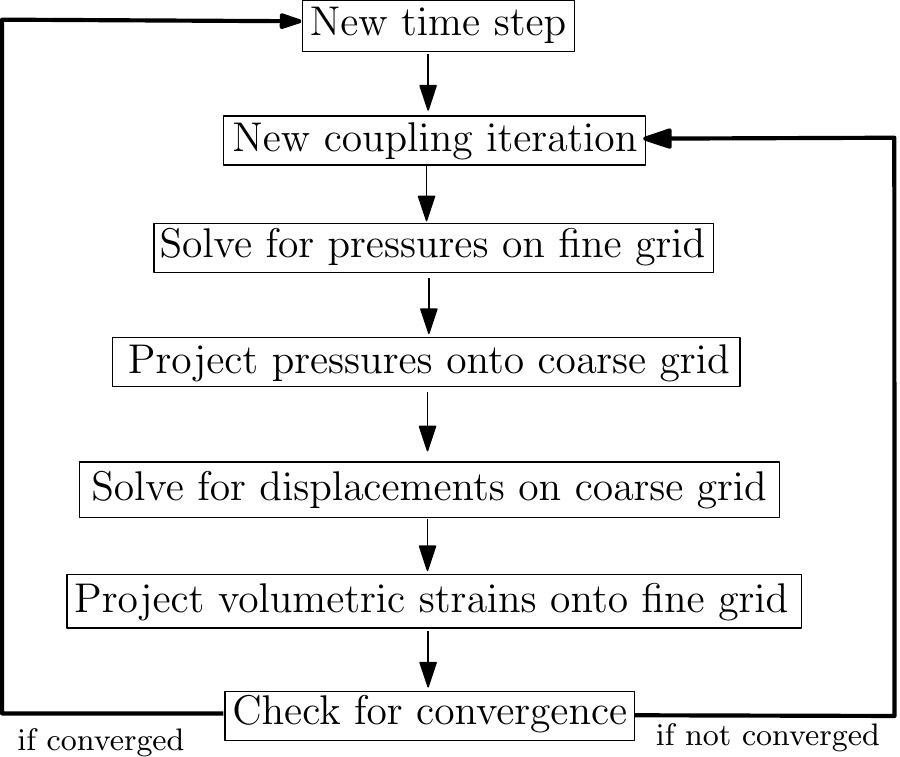}\label{figa}}
\hspace{25pt}
\subfigure[]{\includegraphics[scale=0.55]{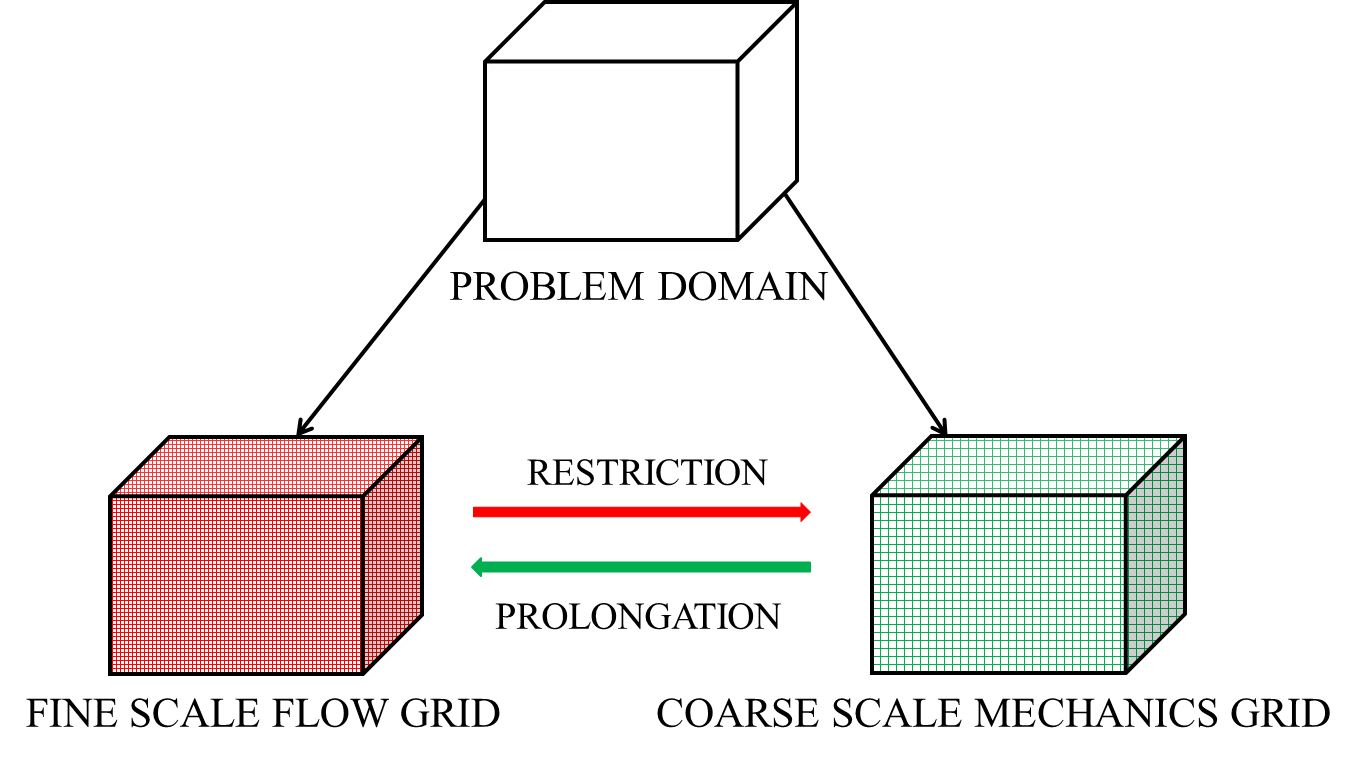}\label{figb}}
\caption{\ref{figa}: Two-grid staggered solution algorithm. A measure of the mean stress remains fixed during the flow solve. After the flow solve, the updated pressures are projected onto the coarse scale poromechanics grid. After the poromechanics solve, the updated volumetric strains are projected onto the fine scale flow grid. \ref{figb}: One coupling iteration of two-grid scheme. In order to be consistent with the terminology used in multigrid methods, we refer to projection onto coarse grid as `restriction' and projection onto fine grid as `prolongation'.}
\label{break}
\end{figure} 
Staggered solution algorithms are used to decompose coupled problems into subproblems which are then solved sequentially in successive iterations until a convergence criterion is met at each time step (\citet{Carlo}, \citet{armero}, \citet{turksa}, \citet{schrefler}). These algorithms offer avenues for augmentations in which subproblems associated with fine scale phenomena can be solved on a fine grid and subproblems associated with coarse scale phenomena can be solved on a coarse grid. Consolidation in deep subsurface reservoirs has inherent length scale disparities with fine scale features of multiphase flow restricted to the reservoir and coarse scale features of geomechanical deformation associated with a domain including but not restricted to the reservoir. In lieu of the above, \citet{dana-2018} developed a two-grid staggered solution algorithm in which the flow equations are solved on a fine grid and the poromechanics equations are solved on a coarse grid (with the grids being non-nested) in every coupling iteration in every time step and used the classical Mandel's problem (\citet{mandel-1953}, \citet{ref27}) to show that the scheme is numerically convergent. Thereafter, motivated by the previous work of \citet{ref12} and \citet{almani-cg}, \citet{danacmame} established theoretical convergence of the two-grid scheme of \citet{dana-2018} for the degenerate case of nested brick grids with the flow and poromechanical domains being identical, as shown in Figure \ref{break}. The measure of mean stress that remains fixed during the flow solve is hydrostatic part of the total stress, also refered to as the mean stress. The interesting result of the work of \citet{danacmame} is that the convergence analysis lends itself to an expression for coarse scale bulk moduli in terms of fine scale bulk moduli, and further the coarse scale moduli are a harmonic mean of the fine scale moduli. The harmonic mean is exactly the Reuss bound (see \citet{saeb-2016}). This observation leads to a hypothesis that there must be a measure of mean stress which when fixed during the flow solve in the two-grid approach, leads to the arithmetic mean (Voigt bound) for coarse scale bulk moduli in terms of fine scale bulk moduli. We already know that the Reuss and Voigt bounds on effective moduli yield the lower and upper bounds for the elastic strain energy for multiphase composites respectively (see \citet{saeb-2016}). The objective of this work is to examine the link between the decoupling constraint used in the two-grid approach and effective coarse scale property that the convergence analysis lends itself to. With that in mind, we define a measure of mean stress which equates to the actual mean stress only as a special case. As a result, the staggering in this work is a generalization of the fixed stress split staggering that was studied in \citet{ref12}, \citet{almani-cg} and \citet{danacmame}. This paper is structured as follows: Section 2 presents the model equations for flow and poromechanics, Section 3 presents the statement of contraction of the two-grid fixed stress split iterative scheme, Section 4 presents the details of how the statement of contraction is used to arrive at restriction and prolongation operators as well as the effective coarse scale moduli, Section 5 presents the two-grid fixed stress split algorithm and Section 6 discusses the link between the decoupling constraint and the Voigt and Reuss bounds.
\subsection{Preliminaries}
Given a bounded convex domain $\Omega\subset \mathbb{R}^3$, we use $Meas(\Omega)$ to denote the volume of $\Omega$, $\mathbb{P}_k(\Omega)$ to represent the restriction of the space of polynomials of degree less that or equal to $k$ to $\Omega$ and $\mathbb{Q}_1(\Omega)$ to denote the space of trilinears on $\Omega$. For the sake of convenience, we discard the differential in the integration of any scalar field $\chi$ over $\Omega$ as follows
\begin{align*}
\tag{$\forall \mathbf{x}\in \Omega$}
\int\limits_{\Omega}\chi(\mathbf{x}) \equiv \int\limits_{\Omega}\chi(\mathbf{x}) \,dV
\end{align*}
Sobolev spaces are based on the space of square integrable functions on $\Omega$ given by
\begin{align*}
L^2(\Omega)\equiv \big\{\theta:\Vert \theta\Vert_{\Omega}^2:=\int\limits_{\Omega}\vert\theta\vert^2 < +\infty \big\},
\end{align*}
\section{Model equations}
\subsection{Flow model}\label{flowmodel1} 
The fluid mass conservation equation \eqref{massagain1} in the presence of deformable porous medium with the Darcy law \eqref{darcyagain1} and linear pressure dependence of density \eqref{compressible1} with boundary conditions \eqref{bcagain11} and initial conditions \eqref{flowendagain1} is
\begin{align}
\label{massagain1}
&\frac{\partial \zeta}{\partial t}+\nabla \cdot \mathbf{z}=q\\
\label{darcyagain1}
&\mathbf{z}=-\frac{\mathbf{K}}{\mu}(\nabla p-\rho_0 \mathbf{g})=-\boldsymbol{\kappa}(\nabla p-\rho_0 \mathbf{g})\\
\label{compressible1}
&\rho=\rho_0(1+c\,(p-p_0))\\
\label{bcagain11}
&p=g \,\, \mathrm{on}\,\,\Gamma_D^f \times (0,T],\,\,\mathbf{z}\cdot\mathbf{n}=0 \,\, \mathrm{on}\,\,\Gamma_N^f \times (0,T]
\\
\label{flowendagain1}
&p(\mathbf{x},0)=p_0(\mathbf{x}),\,\,\rho(\mathbf{x},0)=\rho_0(\mathbf{x}),\,\, \phi(\mathbf{x},0)=\phi_0(\mathbf{x})\qquad (\forall \mathbf{x}\in \Omega)
\end{align}
where $p:\Omega \times (0,T]\rightarrow \mathbb{R}$ is the fluid pressure, $\mathbf{z}:\Omega \times (0,T]\rightarrow \mathbb{R}^3$ is the fluid flux, $\bar{\epsilon}$ is the volumetric strain, $\Gamma_D^f$ is the Dirichlet boundary, $\mathbf{n}$ is the unit outward normal on the Neumann boundary $\Gamma_N^f$, $q$ is the source or sink term, $\mathbf{K}$ is the uniformly symmetric positive definite absolute permeability tensor, $\mu$ is the fluid viscosity, $\rho_0$ is a reference density, $\boldsymbol{\kappa}=\frac{\mathbf{K}}{\mu}$ is a measure of the hydraulic conductivity of the pore fluid, $c$ is the fluid compressibility, $T>0$ is the time interval, $\zeta\equiv \frac{1}{M}p+\alpha \bar{\epsilon}$ is refered to as the fluid content (see \citet{biot3}, \citet{rice}, \citet{review}, \citet{coussy}) where $\alpha\equiv 1-\frac{K_b}{K_s}$ is the Biot constant (see \citet{biot1}, \citet{geertsma}, \citet{nur}) and $M\equiv \frac{1}{\phi_0 c+\frac{(\alpha-\phi_0)(1-\alpha)}{K_b}}$ is the Biot modulus (see \citet{biot3}) with $K_b$ being the drained bulk modulus of the pore skeleton and $K_s$ being the bulk modulus of the solid grains. For the sake of convenience, we introduce a variable $\varphi\equiv \frac{1}{M}+\frac{\alpha^2}{\eta}$, where $\eta$ is an adjustable parameter as we shall in Module \ref{bambam}.
\subsection{Poromechanics model}\label{poromodel1}
The linear momentum balance \eqref{mechstart} in the quasi-static limit of interest with the definition of the total stress \eqref{effective} (see \citet{biot1}) with the expression for the body force \eqref{force} and the small strain assumption \eqref{smallstrain} with boundary conditions \eqref{bc1} and initial condition \eqref{initporo} is
\begin{align}
\label{mechstart}
&\nabla\cdot \boldsymbol{\sigma}+\mathbf{f}=\mathbf{0}\\
\label{effective}
&\boldsymbol{\sigma}=\boldsymbol{\sigma}_0
+\lambda \bar{\epsilon}\mathbf{I}+2G\boldsymbol{\epsilon}
-\alpha (p-p_0)\mathbf{I}\\
\label{force}
&\mathbf{f}=\rho \phi\mathbf{g} + 
\rho_r(1-\phi)\mathbf{g}\\
\label{smallstrain}
&\boldsymbol{\epsilon}(\mathbf{u})
=\frac{1}{2}(\nabla \mathbf{u} + \nabla^T \mathbf{u})\\
\label{bc1}
&\mathbf{u}\cdot\mathbf{n}_1=0\,\, \mathrm{on}\,\,\Gamma_D^p \times [0,T],\,\,\boldsymbol{\sigma}^T\mathbf{n}_2=\mathbf{t}\,\,\mathrm{on}\,\,\Gamma_N^p \times [0,T]\\
\label{initporo}
&\mathbf{u}(\mathbf{x},0)=\mathbf{0}\qquad (\forall\,\,\mathbf{x}\in \Omega)
\end{align}
where $\mathbf{u}:\Omega \times [0,T]\rightarrow \mathbb{R}^3$ is the solid displacement, $\rho_r$ is the rock density, $G$ is the shear modulus, $\nu$ is the Poisson's ratio, $\mathbf{n}_1$ is the unit outward normal to the Dirichlet boundary $\Gamma_D^p$, $\mathbf{n}_2$ is the unit outward normal to the Neumann boundary $\Gamma_N^p$, $\alpha$ is the Biot parameter, $\mathbf{f}$ is body force per unit volume, $\mathbf{t}$ is the traction boundary condition, $\boldsymbol{\epsilon}$ is the strain tensor, $\bar{\epsilon}$ is the volumetric strain, $\boldsymbol{\sigma}_0$ is the in situ stress, $\lambda$ is the Lame parameter and $\mathbf{I}$ the is second order identity tensor.
\subsection{The decoupling assumption}\label{bambam}
The basic idea of the two-grid staggered solution strategy is to solve the flow system \eqref{massagain1}-\eqref{flowendagain1} on a fine grid for the pressures at the current coupling iteration based on the value of a certain measure of mean stress from the previous coupling iteration. We refer to that measure of mean stress as $\bar{\sigma}$, and is expressed as follows
\begin{align*}
\bar{\sigma}=\eta \bar{\epsilon}-\alpha p
\end{align*}
where $\eta$ is an adjustable parameter, which when equated to the drained bulk modulus, lends itself to the total mean stress (refered to as $\sigma_v$) as follows
\begin{align*}
\bar{\sigma}=K_b\bar{\epsilon}-\alpha p\equiv \sigma_v\qquad (\mathrm{when}\,\,\eta=K_b)
\end{align*} 
These pressures are then fed to the poromechanics system \eqref{mechstart}-\eqref{initporo} which is solved for displacements on a coarse grid thereby updating the stress state. This updated stress state is then fed back to the flow system for the next coupling iteration. Since this strategy condemns the porous solid to follow a certain stress path during the flow solve, the convergence of the solution algorithm is not automatically guaranteed. It is important to note that the adjustable $\eta
$ allows for flexibility in the choice of decoupling constraint, and the fixed stress split strategy is only a special case when the adjustable parameter is identical to the drained bulk modulus i.e. when $\eta=K_b$. 
\section{Statement of contraction of the two-grid fixed stress split scheme}
\begin{figure}
\hspace{-40pt}
\includegraphics[scale=1.2]{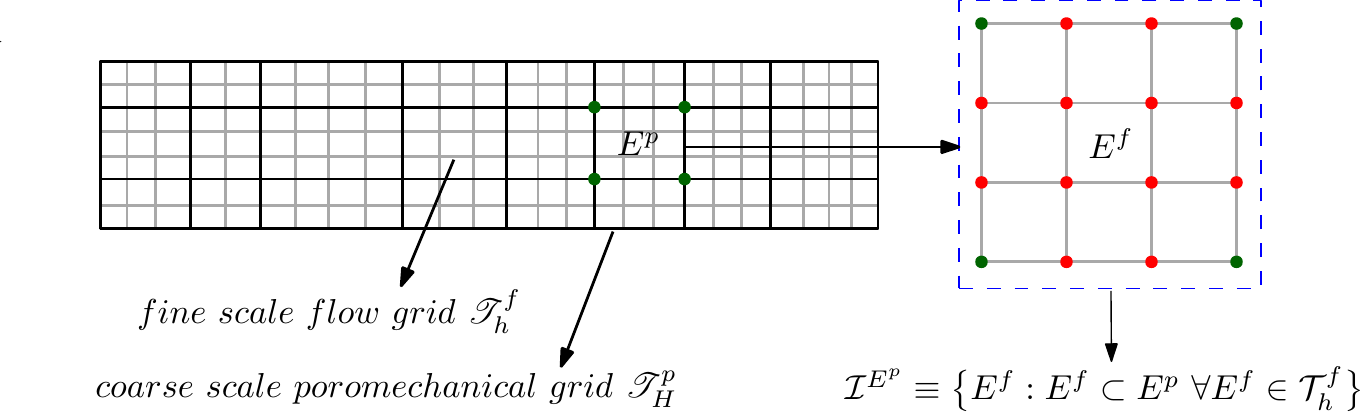}
\caption{Depiction of nested grids (in a two-dimensional framework for the sake of convenience). Red dots are vertices of flow element(s) and green dots are vertices of poromechanical element(s).}
\label{nested}
\end{figure}
The objective of our analysis is to arrive at a contraction map for the fully discrete two-grid staggered solution algorithm while taking into account the heterogeneities in the underlying porous medium. Let $\mathscr{T}_h^f$ represent the fine scale flow grid consisting of brick elements $E^f$ and $\mathscr{T}_H^p$ be the coarse scale poromechanical grid consisting of brick elements $E^p$ such that 
\begin{align*}
r=\frac{\max\limits_{E^p\in\mathscr{T}_H^p}diam(E^p)}
{\max\limits_{E^f\in\mathscr{T}_h^f}diam(E^f)}\geq 1
\end{align*}
Since the grids are nested, each coarse scale poromechanical element $E^p\in \mathscr{T}_H^p$ can be viewed as a union of flow elements belonging to the set $\mathcal{I}^{E^p}$ as follows
\begin{align*}
&E^p=\bigcup\limits_{E^f\in \mathcal{I}^{E^p}} E^f\qquad \mathrm{where}\qquad \mathcal{I}^{E^p}\equiv \big\{E^f:E^f\subset E^p\,\,\forall E^f\in \mathcal{T}_{h}^f\big\}
\end{align*}
%A depiction of $\mathcal{I}^{E^p}$ for the two-dimensional case is given in Figure \ref{break}. 
To take into account the underlying heterogeneities in the porous medium, we introduce the notations $(\cdot)_{E^f}$ for the value of any material parameter $(\cdot)$ at flow element $E^f$ and $(\cdot)_{E^p}$ for the value of any material parameter $(\cdot)$ evaluated at poromechanics element $E^p$.
\subsection{Variational statements in terms of coupling iteration differences} 
We use the notations $(\cdot)^{n+1}$ for any quantity $(\cdot)$ evaluated at time level $n+1$, $(\cdot)^{m,n+1}$ for any quantity $(\cdot)$ evaluated at the $m^{th}$ coupling iteration at time level $n+1$, $\delta^{(m)}_f (\cdot)$ for the change in the quantity $(\cdot)$ during the flow solve in the $(m+1)^{th}$ coupling iteration at any time level and $\delta^{(m)} (\cdot)$ for the change in the quantity $(\cdot)$ over the $(m+1)^{th}$ coupling iteration at any time level. The discrete variational statements in terms of coupling iteration differences is : find $\delta^{(m)} p_h\in W_h$, $\delta^{(m)} \mathbf{z}_h\in \mathbf{V}_h$ and $\delta^{(m)} \mathbf{u}_H\in \mathbf{U}_H$ such that
\begin{align}
\label{msone}
&\sum\limits_{E^f\in \mathscr{T}_h^f}\varphi_{E^f}(\delta^{(m)} p_h,\theta_h)_{E^f}+\sum\limits_{E^f\in \mathscr{T}_h^f}\Delta t(\nabla \cdot \delta^{(m)} \mathbf{z}_h,\theta_h)_{E^f}=-\sum\limits_{E^f\in \mathscr{T}_h^f}\frac{\alpha_{E^f}}{\eta_{E^f}}(\delta^{(m-1)} \bar{\sigma},\theta_h)_{E^f}\\
\label{mstwo}
&\sum\limits_{E^f\in \mathscr{T}_h^f}
(\boldsymbol{\kappa}_{E^f}^{-1}\delta^{(m)} \mathbf{z}_h, \mathbf{v}_h)_{E^f}=\sum\limits_{E^f\in \mathscr{T}_h^f}(\delta^{(m)} p_h,\nabla \cdot \mathbf{v}_h)_{E^f}\\
\label{msthree}
&\sum\limits_{E^p\in \mathscr{T}_H^p} 2G_{E^p}(\boldsymbol{e}(\delta^{(m)} \mathbf{u}_H),\boldsymbol{e}(\mathbf{q}_H))_{E^p}+\sum\limits_{E^p\in \mathscr{T}_H^p}(\delta^{(m)} \bar{\sigma},\nabla \cdot \mathbf{q}_H)_{E^p}=0
\end{align}
where the finite dimensional spaces $W_h$, $W_H$ $\mathbf{V}_h$ and $\mathbf{U}_H$ are given by 
\begin{align*}
&W_h= \big\{\theta_h:\theta_h\vert_{E^f}\in \mathbb{P}_0(E^f)\,\,\forall E^f\in \mathscr{T}_h^f\big\}\\
&W_H= \big\{\theta'_H:\theta'_H\vert_{E^p}\in \mathbb{P}_0(E^p)\,\,\forall E^p\in \mathscr{T}_H^p\big\}\\
&\mathbf{V}_h=\big\{\mathbf{v}_h:\mathbf{v}_h\vert_{E^f}\leftrightarrow \hat{\mathbf{v}}\vert_{\hat{E}}:\hat{\mathbf{v}}\vert_{\hat{E}}\in \hat{\mathbf{V}}(\hat{E})\,\,\forall E^f\in \mathscr{T}_h^f,\,\,\mathbf{v}_h \cdot \mathbf{n}=0\,\,\mathrm{on}\,\,\Gamma_N^f\big\}\\
&\mathbf{U}_H=\big\{\mathbf{q}_H=(u,v,w):u\vert_{E^p},
v\vert_{E^p},w\vert_{E^p}\in \mathbb{Q}_1(E^p)\,\,\forall E^p\in \mathscr{T}_H^p,\mathbf{q}_H=\mathbf{0}\,\,\mathrm{on}\,\,\Gamma_D^p\big\}
\end{align*}   
and the details of $\hat{\mathbf{V}}(\hat{E})$ are given in \citet{dana-2018}. The equations \eqref{msone}, \eqref{mstwo} and \eqref{msthree} are the discrete variational statements (in terms of coupling iteration differences) of \eqref{massagain1}, \eqref{darcyagain1} and \eqref{mechstart} respectively. The details of \eqref{msone} and \eqref{mstwo} are given in \ref{discreteflow} whereas the details of \eqref{msthree} are given in \ref{discretemechanics}.
\subsection{Restriction and prolongation operators}
%\begin{figure}[h]
%\centering
%\includegraphics[trim={3cm 0 0 2cm},clip,scale=1.25]{nested2.eps}
%\caption{Restriction and prolongation operators.}
%\label{operators}
%\end{figure}
We introduce the restriction operator $\mathscr{R}$ that maps the fine scale pressure solution onto the coarse scale poromechanics grid and the prolongation operator $\mathscr{P}$ that maps the coarse scale volumetric strain onto the fine scale flow grid as follows
\begin{align*}
&\mathscr{R}:W_h\mapsto W_H\\
&\mathscr{P}:\nabla \cdot \mathbf{U}_H\mapsto W_h
\end{align*}  
As a result, the measure of the mean stress is defined on the fine and coarse grids as
\begin{align}
\label{use1}
\bar{\sigma}=\eta_{E^p}\bar{\epsilon}_H-\alpha_{E^p}\mathscr{R}p_h
\qquad (\forall\,\,E^p\in \mathscr{T}_H^p)\\
\label{use2}
\bar{\sigma}=\eta_{E^f}\mathscr{P}\bar{\epsilon}_H-\alpha_{E^f}p_h
\qquad (\forall\,\,E^f\in \mathscr{T}_h^f)
\end{align}
\begin{theorem}\label{map}
In the presence of medium heterogeneities, the two-grid staggered solution algorithm in which the flow subproblem is resolved on a finer grid is a contraction map with contraction constant $\gamma$ and given by 
\begin{align}
\nonumber
&\sum\limits_{E^f\in \mathscr{T}_h^f} \frac{\Vert\delta^{(m)} \bar{\sigma}\Vert^2_{E^f}}{\eta_{E^f}}+\overbrace{\sum\limits_{E^p\in \mathscr{T}_H^p} 4G_{E^p}\Vert\boldsymbol{e}(\delta^{(m)} \mathbf{u}_H)\Vert^2_{E^p}}^{>0}+\overbrace{\sum\limits_{E^p\in \mathscr{T}_H^p} (2K_{b_{E^p}}-\eta_{E^p})\Vert\delta^{(m)} \bar{\epsilon}_H\Vert^2_{E^p}}^{>0}\\
\label{contraction}
&+\overbrace{\sum\limits_{E^f\in \mathscr{T}_h^f}
2\Delta t\Vert\boldsymbol{\kappa}_{E^f}^{-1/2}\delta^{(m)} \mathbf{z}_h \Vert^2_{E^f}}^{>0}\leq \overbrace{\max\limits_{E^f\in \mathscr{T}_h^f}\Bigg(\frac{\alpha_{E^f}^2}{\frac{\eta_{E^f}}{M_{E^f}}+\alpha_{E^f}^2}\Bigg)}^{\gamma<1}
\sum\limits_{E^f\in \mathscr{T}_h^f}
\frac{\Vert\delta^{(m-1)} \bar{\sigma}\Vert^2_{E^f}}{\eta_{E^f}}
\end{align}
if the following conditions are satisfied
\begin{enumerate}
\item First condition
\begin{align}
\nonumber
&\sum\limits_{E^f\in \mathscr{T}_h^f} \alpha_{E^f} (\mathscr{P}\delta^{(m)}
\bar{\epsilon}_H,\delta^{(m)} p_h)_{E^f}-\sum\limits_{E^p\in \mathscr{T}_H^p}\alpha_{E^p}(\delta^{(m)} \bar{\epsilon}_H ,\mathscr{R}\delta^{(m)} p_h)_{E^p}=0
\end{align}
\item Second condition
\begin{align}
\nonumber
&\sum\limits_{E^p\in \mathscr{T}_H^p} \eta_{E^p}\Vert\delta^{(m)} \bar{\epsilon}_H\Vert^2_{E^p}-\sum\limits_{E^f\in \mathscr{T}_h^f}\eta_{E^f}\Vert\mathscr{P}\delta^{(m)}
\bar{\epsilon}_H\Vert^2_{E^f}\geq 0
\end{align}
\item Third condition
\begin{align}
\eta_{E^p}\leq 2K_{b_{E^p}} \qquad (\forall E^p\in \mathscr{T}_H^p)
\end{align}
\end{enumerate}
\end{theorem}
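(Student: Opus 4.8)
The plan is a discrete energy estimate on the coupling-iteration-difference system \eqref{msone}--\eqref{msthree}, carried out at an arbitrary time level. I would test \eqref{msone} with $\theta_h=2\,\delta^{(m)}p_h$, \eqref{mstwo} with $\mathbf v_h=2\Delta t\,\delta^{(m)}\mathbf z_h$ and \eqref{msthree} with $\mathbf q_H=2\,\delta^{(m)}\mathbf u_H$, and add the three identities. The two mixed flux--pressure contributions cancel, and since $\bar\epsilon_H=\nabla\cdot\mathbf u_H$, pairing $\delta^{(m)}\bar\sigma$ with $\nabla\cdot\delta^{(m)}\mathbf u_H$ reproduces $\delta^{(m)}\bar\epsilon_H$ on each $E^p$, so what remains is
\begin{align*}
&2\sum_{E^f}\varphi_{E^f}\Vert\delta^{(m)}p_h\Vert^2_{E^f}
+2\Delta t\sum_{E^f}\Vert\boldsymbol\kappa_{E^f}^{-1/2}\delta^{(m)}\mathbf z_h\Vert^2_{E^f}
+4\sum_{E^p}G_{E^p}\Vert\boldsymbol e(\delta^{(m)}\mathbf u_H)\Vert^2_{E^p}\\
&\qquad{}+2\sum_{E^p}\big(\delta^{(m)}\bar\sigma,\delta^{(m)}\bar\epsilon_H\big)_{E^p}
=-2\sum_{E^f}\frac{\alpha_{E^f}}{\eta_{E^f}}\big(\delta^{(m-1)}\bar\sigma,\delta^{(m)}p_h\big)_{E^f}.
\end{align*}

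The next step is to open up $\bar\sigma$ through its two grid-wise representations. Expanding $\delta^{(m)}\bar\sigma$ on $E^p$ via the poromechanical constitutive law \eqref{effective}, which supplies the drained bulk modulus $K_{b_{E^p}}=\lambda_{E^p}+\tfrac23 G_{E^p}$ as the volumetric coefficient, as $K_{b_{E^p}}\delta^{(m)}\bar\epsilon_H-\alpha_{E^p}\mathscr R\delta^{(m)}p_h$, the coarse term becomes $2K_{b_{E^p}}\Vert\delta^{(m)}\bar\epsilon_H\Vert^2_{E^p}-2\alpha_{E^p}(\mathscr R\delta^{(m)}p_h,\delta^{(m)}\bar\epsilon_H)_{E^p}$. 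The First condition converts $\sum_{E^p}$ of the cross products into the flow-grid quantity $-2\sum_{E^f}\alpha_{E^f}(\mathscr P\delta^{(m)}\bar\epsilon_H,\delta^{(m)}p_h)_{E^f}$, and on the flow grid \eqref{use2} gives $\tfrac1{\eta_{E^f}}\Vert\delta^{(m)}\bar\sigma\Vert^2_{E^f}=\eta_{E^f}\Vert\mathscr P\delta^{(m)}\bar\epsilon_H\Vert^2_{E^f}-2\alpha_{E^f}(\mathscr P\delta^{(m)}\bar\epsilon_H,\delta^{(m)}p_h)_{E^f}+\tfrac{\alpha_{E^f}^2}{\eta_{E^f}}\Vert\delta^{(m)}p_h\Vert^2_{E^f}$. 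Substituting, and using $\varphi_{E^f}=\tfrac1{M_{E^f}}+\tfrac{\alpha_{E^f}^2}{\eta_{E^f}}$ to collapse the pressure coefficient to $\varphi_{E^f}+\tfrac1{M_{E^f}}$, produces the term $\sum_{E^f}\Vert\delta^{(m)}\bar\sigma\Vert^2_{E^f}/\eta_{E^f}$ and leaves $-\sum_{E^f}\eta_{E^f}\Vert\mathscr P\delta^{(m)}\bar\epsilon_H\Vert^2_{E^f}$; the Second condition bounds this below by $-\sum_{E^p}\eta_{E^p}\Vert\delta^{(m)}\bar\epsilon_H\Vert^2_{E^p}$, which combines with $2\sum_{E^p}K_{b_{E^p}}\Vert\delta^{(m)}\bar\epsilon_H\Vert^2_{E^p}$ into exactly $\sum_{E^p}(2K_{b_{E^p}}-\eta_{E^p})\Vert\delta^{(m)}\bar\epsilon_H\Vert^2_{E^p}$. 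The Third condition then guarantees $2K_{b_{E^p}}-\eta_{E^p}\ge0$, so this term and the $\boldsymbol e$- and flux-terms are all nonnegative, which is what makes the bracketed $>0$ labels legitimate and lets the contraction in $\bar\sigma$ alone be extracted.

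To close, I would estimate the right-hand side elementwise by Young's inequality, $-2\tfrac{\alpha_{E^f}}{\eta_{E^f}}(\delta^{(m-1)}\bar\sigma,\delta^{(m)}p_h)_{E^f}\le\gamma_{E^f}\tfrac{\Vert\delta^{(m-1)}\bar\sigma\Vert^2_{E^f}}{\eta_{E^f}}+\varphi_{E^f}\Vert\delta^{(m)}p_h\Vert^2_{E^f}$, with the weight tuned so that the amount absorbed on the left is precisely $\varphi_{E^f}\Vert\delta^{(m)}p_h\Vert^2_{E^f}$ and not the full available coefficient $\varphi_{E^f}+\tfrac1{M_{E^f}}$; a one-line computation then forces $\gamma_{E^f}=\alpha_{E^f}^2/\!\big(\tfrac{\eta_{E^f}}{M_{E^f}}+\alpha_{E^f}^2\big)$. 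The leftover $\sum_{E^f}\tfrac1{M_{E^f}}\Vert\delta^{(m)}p_h\Vert^2_{E^f}\ge0$ is discarded, and replacing $\gamma_{E^f}$ by $\gamma:=\max_{E^f\in\mathscr T_h^f}\gamma_{E^f}$ gives \eqref{contraction}; that $\gamma<1$ is immediate since $\eta_{E^f}/M_{E^f}>0$. The steps I expect to demand the most care are the cross-grid bookkeeping of the restriction and prolongation terms — which is exactly where the First and Second conditions get consumed — and the calibration of the Young weight, since absorbing the full pressure coefficient instead would yield the weaker constant $\alpha^2/\!\big(\tfrac{2\eta}{M}+\alpha^2\big)$; the remainder is estimation with bounded bilinear forms and needs no further hypotheses.
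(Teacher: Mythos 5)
Your proposal is correct and takes essentially the same route as the paper's proof: the same energy estimate on the coupling-iteration-difference system with the same (up to factors of $2$ and ordering) test functions, the same coarse/fine expansions of $\delta^{(m)}\bar\sigma$ that produce the three bracketed conditions, and the same Young calibration $\varepsilon_{E^f}=\varphi_{E^f}/\alpha_{E^f}^2$ yielding $\gamma=\max_{E^f}\alpha_{E^f}^2/\big(\tfrac{\eta_{E^f}}{M_{E^f}}+\alpha_{E^f}^2\big)$, the paper merely performing the Young step before adding the poromechanics identity and discarding $\varphi_{E^f}>\alpha_{E^f}^2/\eta_{E^f}$ where you discard $\tfrac{1}{M_{E^f}}\Vert\delta^{(m)}p_h\Vert^2_{E^f}$. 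One minor aside: absorbing the full available coefficient $\varphi_{E^f}+\tfrac{1}{M_{E^f}}$ would give the \emph{smaller} (hence sharper, still sufficient) constant $\alpha_{E^f}^2/\big(\tfrac{2\eta_{E^f}}{M_{E^f}}+\alpha_{E^f}^2\big)$, not a weaker one, though this does not affect the validity of your argument.
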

\begin{proof}
$\bullet$ \textbf{Step 1: Flow equations}\\ 
Testing \eqref{msone} with $\theta_h\in W_h$ such that $\theta_h\vert_{E^f}= \delta^{(m)} p_h \,\,\forall\,\,E^f\in \mathscr{T}_h^f$, we get
\begin{align}
\label{msfour}
&\sum\limits_{E^f\in \mathscr{T}_h^f}\varphi_{E^f} \Vert \delta^{(m)} p_h \Vert^2_{E^f}+\sum\limits_{E^f\in \mathscr{T}_h^f}\Delta t(\nabla \cdot \delta^{(m)} \mathbf{z}_h,\delta^{(m)} p_h)_{E^f}=-\sum\limits_{E^f\in \mathscr{T}_h^f}\frac{\alpha_{E^f}}{\eta_{E^f}}(\delta^{(m-1)} \bar{\sigma},\delta^{(m)} p_h)_{E^f}
\end{align}
Testing \eqref{mstwo} with $\mathbf{v}_h\in \mathbf{V}_h$ such that $\mathbf{v}_h\vert_{E^f}\equiv \delta^{(m)} \mathbf{z}_h\,\,\forall\,\,E^f\in \mathscr{T}_h^f$, we get
\begin{align}
\label{msfive}
&\sum\limits_{E^f\in \mathscr{T}_h^f}
\Vert\boldsymbol{\kappa}_{E^f}^{-1/2}\delta^{(m)} \mathbf{z}_h \Vert^2_{E^f}=\sum\limits_{E^f\in \mathscr{T}_h^f}(\delta^{(m)} p_h,\nabla \cdot \delta^{(m)} \mathbf{z}_h)_{E^f}
\end{align}
From \eqref{msfour} and \eqref{msfive}, we get
\begin{align}
\label{mssix}
&\sum\limits_{E^f\in \mathscr{T}_h^f}\varphi_{E^f}\Vert \delta^{(m)} p_h \Vert^2+\sum\limits_{E^f\in \mathscr{T}_h^f}\Delta t
\Vert\boldsymbol{\kappa}_{E^f}^{-1/2}\delta^{(m)} \mathbf{z}_h \Vert^2_{E^f}=-\sum\limits_{E^f\in \mathscr{T}_h^f}\frac{\alpha_{E^f}}{\eta_{E^f}}(\delta^{(m-1)} \bar{\sigma},\delta^{(m)} p_h)_{E^f}
\end{align}
$\bullet$ \textbf{Step 2: Invoking the Young's inequality}\\
Since the terms on the LHS of \eqref{mssix} are strictly positive, the RHS is also strictly positive. We invoke the Young's inequality 
\begin{align*}
\vert ab\vert \leq \frac{a^2}{2\varepsilon}+\frac{\varepsilon b^2}{2}\qquad \forall\,\,a,b,\varepsilon\in \mathbb{R},\varepsilon> 0
\end{align*} 
for the RHS of \eqref{mssix} as follows
\begin{align}
\nonumber
&-\frac{\alpha_{E^f}}{\eta_{E^f}}(\delta^{(m-1)} \bar{\sigma},\delta^{(m)} p_h)_{E^f}\leq \frac{1}{2\varepsilon_{E^f}\eta_{E^f}^2}\Vert\delta^{(m-1)} \bar{\sigma}\Vert^2_{E^f}
+\frac{\varepsilon_{E^f}}{2}\Vert \alpha_{E^f} \delta^{(m)} p_h \Vert^2_{E^f} \,\,(\forall\,\,E^f \in \mathscr{T}_h^f)
\end{align}
Since the above inequality is true for any $\varepsilon_{E^f}> 0$, we choose $\varepsilon_{E^f}=\frac{1}{\alpha_{E^f}^2}\varphi_{E^f}$ to get
\begin{align}
\nonumber
&-\frac{\alpha_{E^f}}{\eta_{E^f}}(\delta^{(m-1)} \bar{\sigma},\delta^{(m)} p_h)_{E^f}\leq \frac{\alpha_{E^f}^2}{2\eta_{E^f}\varphi_{E^f}}\frac{\Vert\delta^{(m-1)} \bar{\sigma}\Vert^2_{E^f}}{\eta_{E^f}}+\frac{\varphi_{E^f}}{2}\Vert \delta^{(m)} p_h \Vert^2_{E^f}
\,\, (\forall\,\,E^f \in \mathscr{T}_h^f)
\end{align}
In lieu of the above, \eqref{mssix} is written as
\begin{align}
\nonumber
&\sum\limits_{E^f\in \mathscr{T}_h^f}\varphi_{E^f}\Vert \delta^{(m)} p_h \Vert^2_{E^f}+\sum\limits_{E^f\in \mathscr{T}_h^f}\Delta t
\Vert\boldsymbol{\kappa}_{E^f}^{-1/2}\delta^{(m)} \mathbf{z}_h \Vert^2_{E^f}\\
\nonumber
&\leq \sum\limits_{E^f\in \mathscr{T}_h^f}\frac{\alpha_{E^f}^2}{2\eta_{E^f}\varphi_{E^f}}\frac{\Vert\delta^{(m-1)} \bar{\sigma}\Vert^2_{E^f}}{\eta_{E^f}}
+\sum\limits_{E^f\in \mathscr{T}_h^f}\frac{\varphi_{E^f}}{2}\Vert \delta^{(m)} p_h \Vert^2_{E^f}  
\end{align}
which can also be written as
\begin{align}
\nonumber
&\sum\limits_{E^f\in \mathscr{T}_h^f}\frac{\varphi_{E^f}}{2} \Vert \delta^{(m)} p_h \Vert^2+\sum\limits_{E^f\in \mathscr{T}_h^f}\Delta t
\Vert\boldsymbol{\kappa}_{E^f}^{-1/2}\delta^{(m)} \mathbf{z}_h \Vert^2_{E^f}\leq \sum\limits_{E^f\in \mathscr{T}_h^f}\frac{\alpha_{E^f}^2}{2\eta_{E^f}\varphi_{E^f}}\frac{\Vert\delta^{(m-1)} \bar{\sigma}\Vert^2_{E^f}}{\eta_{E^f}}
\end{align}
which, after noting that $\varphi_{E^f}\equiv \bigg(\frac{1}{M_{E^f}}+\frac{\alpha_{E^f}^2}{\eta_{E^f}}\bigg)>\frac{\alpha_{E^f}^2}{\eta_{E^f}}$, can also be written as
\begin{align}
\label{interim3}
&\sum\limits_{E^f\in \mathscr{T}_h^f}\frac{\alpha_{E^f}^2}{\eta_{E^f}}\Vert \delta^{(m)} p_h \Vert^2_{E^f}+\sum\limits_{E^f\in \mathscr{T}_h^f}2\Delta t
\Vert\boldsymbol{\kappa}_{E^f}^{-1/2}\delta^{(m)} \mathbf{z}_h \Vert^2_{E^f}\leq \sum\limits_{E^f\in \mathscr{T}_h^f}\frac{\alpha_{E^f}^2}{\eta_{E^f}\varphi_{E^f}}\frac{\Vert\delta^{(m-1)} \bar{\sigma}\Vert^2_{E^f}}{\eta_{E^f}}
\end{align}
$\bullet$ \textbf{Step 3: Poromechanics equations}\\
Testing \eqref{msthree} with $\mathbf{q}_H\in \mathbf{Q}_H$ such that $\mathbf{q}\vert_{E^p}= 2\delta^{(m)}\mathbf{u}_H\,\,\forall\,\,E^p\in \mathscr{T}_H^p$
and noting that $\nabla \cdot \delta^{(m)} \mathbf{u}_H\equiv \delta^{(m)} \bar{\epsilon}_H$, we get
\begin{align}
\label{msrandom}
\sum\limits_{E^p\in \mathscr{T}_H^p} 4G_{E^p}\Vert\boldsymbol{e}(\delta^{(m)} \mathbf{u}_H)\Vert^2_{E^p}+\sum\limits_{E^p\in \mathscr{T}_H^p} 2(\delta^{(m)} \sigma_v,\delta^{(m)}\bar{\epsilon}_H)_{E^p}=0
\end{align}
Further, from \eqref{use1}, we note that $\delta^{(m)} \bar{\sigma}=K_{b_{E^p}}\delta^{(m)} \bar{\epsilon}_H-\alpha_{E^p} \mathscr{R}\delta^{(m)} p_h\,\,\forall\,\,E^p\in \mathscr{T}_H^p$. As a result, \eqref{msrandom} is written as
\begin{align}
\nonumber
&\sum\limits_{E^p\in \mathscr{T}_H^p} 4G_{E^p}\Vert\boldsymbol{e}(\delta^{(m)} \mathbf{u}_H)\Vert^2_{E^p}+\sum\limits_{E^p\in \mathscr{T}_H^p} 2K_{b_{E^p}}\Vert\delta^{(m)} \bar{\epsilon}_H\Vert^2_{E^p}\\
\label{msten}
&-\sum\limits_{E^p\in \mathscr{T}_H^p}2\alpha_{E^p} (\delta^{(m)}\bar{\epsilon}_H,\mathscr{R}\delta^{(m)} p_h)_{E^p}=0
\end{align}
$\bullet$ \textbf{Step 4: Combining flow and poromechanics equations}\\
Adding \eqref{interim3} and \eqref{msten}, we get
\begin{align}
\nonumber
&\sum\limits_{E^f\in \mathscr{T}_h^f}\frac{\alpha_{E^f}^2}{\eta_{E^f}}\Vert \delta^{(m)} p_h \Vert^2_{E^f}+\sum\limits_{E^f\in \mathscr{T}_h^f}2\Delta t
\Vert\boldsymbol{\kappa}_{E^f}^{-1/2}\delta^{(m)} \mathbf{z}_h \Vert^2_{E^f}+\sum\limits_{E^p\in \mathscr{T}_H^p} 4G_{E^p}\Vert\boldsymbol{e}(\delta^{(m)} \mathbf{u}_H)\Vert^2_{E^p}\\
\label{interim4}
&+\sum\limits_{E^p\in \mathscr{T}_H^p} 2K_{b_{E^p}}\Vert\delta^{(m)} \bar{\epsilon}_H\Vert^2_{E^p}-\sum\limits_{E^p\in \mathscr{T}_H^p}2\alpha_{E^p} (\delta^{(m)}\bar{\epsilon}_H,\mathscr{R}\delta^{(m)} p_h)_{E^p} \leq \sum\limits_{E^f\in \mathscr{T}_h^f}\frac{\alpha_{E^f}^2}{\eta_{E^f}\varphi_{E^f}}\frac{\Vert\delta^{(m-1)} \bar{\sigma}\Vert^2_{E^f}}{\eta_{E^f}}
\end{align}
Now, from \eqref{use2}, we note that 
\begin{align*}
&\Vert\delta^{(m)} \bar{\sigma}\Vert^2_{E^f}=\alpha_{E^f}^2\Vert \delta^{(m)} p_h \Vert^2_{E^f}+\eta_{E^f}^2\Vert\mathscr{P}
\delta^{(m)}\bar{\epsilon}_H\Vert^2_{E^f}-2\eta_{E^f} \alpha_{E^f}(\mathscr{P}\delta^{(m)}
\bar{\epsilon}_H,\delta^{(m)} p_h)_{E^f}\\
&(\forall\,\,E^f\in \mathscr{T}_h^f) 
\end{align*}
which implies that
\begin{align}
\nonumber
&\frac{\alpha_{E^f}^2}{\eta_{E^f}}\Vert \delta^{(m)} p_h \Vert^2_{E^f}=\frac{\Vert\delta^{(m)} \bar{\sigma}\Vert^2_{E^f}}{\eta_{E^f}}-\eta_{E^f}
\Vert\mathscr{P}\delta^{(m)}
\bar{\epsilon}_H\Vert^2_{E^f}
+2 \alpha_{E^f}(\mathscr{P}\delta^{(m)}
\bar{\epsilon}_H,\delta^{(m)} p_h)_{E^f}\\
\label{interim5}
&(\forall\,\,E^f\in \mathscr{T}_h^f) 
\end{align}
Substituting \eqref{interim5} in \eqref{interim4}, we get
\begin{align}
\nonumber
&\sum\limits_{E^f\in \mathscr{T}_h^f} \frac{\Vert\delta^{(m)} \bar{\sigma}\Vert^2_{E^f}}{\eta_{E^f}}+\overbrace{\sum\limits_{E^p\in \mathscr{T}_H^p} 4G_{E^p}\Vert\boldsymbol{e}(\delta^{(m)} \mathbf{u}_H)\Vert^2_{E^p}}^{>0}+\overbrace{\sum\limits_{E^p\in \mathscr{T}_H^p} (2K_{b_{E^p}}-\eta_{E^p})\Vert\delta^{(m)} \bar{\epsilon}_H\Vert^2_{E^p}}^{>0}\\
\nonumber
&+\Bigg[\overbrace{\sum\limits_{E^f\in \mathscr{T}_h^f} 2\alpha_{E^f}(\mathscr{P}\delta^{(m)}
\bar{\epsilon}_H,\delta^{(m)} p_h)_{E^f}-\sum\limits_{E^p\in \mathscr{T}_H^p}2\alpha_{E^p}(\delta^{(m)} \bar{\epsilon}_H, \mathscr{R}\delta^{(m)} p_h)_{E^p}}
^{\mathrm{Set\,\,=\,\,0\,\,to\,\,obtain\,\,expressions\,\,for\,\,\eta_{E^p}\,\,\forall\,\,E^p\in \mathscr{T}_H^p\,\,and\,\,\mathscr{P}\delta^{(m)}
\bar{\epsilon}_H\,\,\forall\,\,E^f\in \mathscr{T}_h^f}}\Bigg]\\
\nonumber
&+\Bigg[\overbrace{\sum\limits_{E^p\in \mathscr{T}_H^p} \eta_{E^p}\Vert \delta^{(m)} \bar{\epsilon}_H \Vert^2_{E^p}-\sum\limits_{E^f\in \mathscr{T}_h^f}\eta_{E^f}
\Vert\mathscr{P}\delta^{(m)}\bar{\epsilon}_H\Vert^2_{E^f}}
^{\mathrm{Turns\,\,out\,\,to\,\,be\,\,\geq 0\,\,in\,\,lieu\,\,of\,\,Cauchy-Schwartz\,\,inequality}}\Bigg]\\
\label{mseleven}
&+\overbrace{\sum\limits_{E^f\in \mathscr{T}_h^f}2\Delta t
\Vert\boldsymbol{\kappa}_{E^f}^{-1/2}\delta^{(m)} \mathbf{z}_h \Vert^2_{E^f}}^{>0}\leq \gamma\sum\limits_{E^f\in \mathscr{T}_h^f} {\frac{\Vert\delta^{(m-1)} \bar{\sigma}\Vert^2_{E^f}}{\eta_{E^f}}}
\end{align}
The statement \eqref{mseleven} is a contraction map in a sense that
\begin{align*}
\sum\limits_{E^f\in \mathscr{T}_h^f} \frac{\Vert\delta^{(0)} \bar{\sigma}\Vert^2_{E^f}}{\eta_{E^f}}>\sum\limits_{E^f\in \mathscr{T}_h^f} \frac{\Vert\delta^{(1)} \bar{\sigma}\Vert^2_{E^f}}{\eta_{E^f}}>\sum\limits_{E^f\in \mathscr{T}_h^f} \frac{\Vert\delta^{(2)} \bar{\sigma}\Vert^2_{E^f}}{\eta_{E^f}}>...
\end{align*}
with contraction constant $\gamma$ given by  
\begin{align}
\nonumber
\gamma \equiv \max\limits_{E^f\in \mathscr{T}_h^f}\bigg(\frac{\alpha_{E^f}^2}{\eta_{E^f}\varphi_{E^f}}\bigg)=\max\limits_{E^f\in \mathscr{T}_h^f}\bigg(\frac{\alpha_{E^f}^2}{\frac{\eta_{E^f}}{M_{E^f}}+\alpha_{E^f}^2}\bigg)<1
\end{align}
provided the following are true
\begin{align}
\label{condition1}
&\sum\limits_{E^f\in \mathscr{T}_h^f}\alpha_{E^f}(\mathscr{P}\delta^{(m)}\bar{\epsilon}_H,\delta^{(m)} p_h)_{E^f}-\sum\limits_{E^p\in \mathscr{T}_H^p}\alpha_{E^p}(\delta^{(m)} \bar{\epsilon}_H,\mathscr{R}\delta^{(m)} p_h)_{E^p}=0\\
\label{condition2}
&\sum\limits_{E^p\in \mathscr{T}_H^p} \eta_{E^p}\Vert \delta^{(m)} \bar{\epsilon}_H \Vert^2_{E^p}
-\sum\limits_{E^f\in \mathscr{T}_h^f}
\eta_{E^f}
\Vert\mathscr{P}\delta^{(m)}\bar{\epsilon}_H\Vert^2_{E^f}\geq 0\\
\label{condition3}
&\eta_{E^p}\leq 2K_{b_{E^p}} \qquad (\forall E^p\in \mathscr{T}_H^p)
\end{align}
\end{proof}
The objective now is to satisfy the conditions \eqref{condition1} and \eqref{condition2} for the convergence of the two-grid staggered solution algorithm.
\section{Satisfaction of conditions for the convergence of the fully discrete two-grid staggered solution algorithm}
\begin{corollary}
Satisfaction of the decoupling constraint during the flow solve at both scales leads to the following expressions for the upscaled pore pressures
\begin{align*}
\tag{$\forall\,\,E^p\in \mathcal{T}_H^p$}
&\mathscr{R}\delta^{(m)}p_h
=\frac{\eta_{E^p}}{\alpha_{E^p}}\sum\limits_{E^f\in \mathcal{I}^{E^p}}\frac{\alpha_{E^f}}{\eta_{E^f}}\delta^{(m)}p_h
\frac{Meas(E^f)}{Meas(E^p)}
\end{align*}
\end{corollary}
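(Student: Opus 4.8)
The plan is to combine the two incarnations \eqref{use1} and \eqref{use2} of the decoupling constraint, in coupling-iteration-difference form, so as to eliminate the frozen mean-stress measure $\delta^{(m)}\bar{\sigma}$ between them, using that the grids are nested, so $\mathcal{I}^{E^p}$ partitions each coarse element $E^p$ into fine elements. The key structural observation is that the mean-stress measure produced by the coarse poromechanics solve is a single constant on each $E^p$ — through \eqref{use1} it is assembled from the coarse datum $\delta^{(m)}\bar{\epsilon}_H$ (the element average of $\nabla\cdot\mathbf{u}_H$) and from $\mathscr{R}\delta^{(m)}p_h\in W_H$ — so on every $E^f\in\mathcal{I}^{E^p}$ it takes that same value, which I will write $\delta^{(m)}\bar{\sigma}\vert_{E^p}$.

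First I would solve the fine-scale constraint \eqref{use2} for the prolonged volumetric strain on each $E^f\in\mathcal{I}^{E^p}$, obtaining $\mathscr{P}\delta^{(m)}\bar{\epsilon}_H\vert_{E^f}=\frac{1}{\eta_{E^f}}\,\delta^{(m)}\bar{\sigma}\vert_{E^p}+\frac{\alpha_{E^f}}{\eta_{E^f}}\,\delta^{(m)}p_h\vert_{E^f}$. Multiplying by $Meas(E^f)$, summing over $E^f\in\mathcal{I}^{E^p}$, and invoking the conservativeness of the prolongation on a coarse element, $\delta^{(m)}\bar{\epsilon}_H\vert_{E^p}\,Meas(E^p)=\sum_{E^f\in\mathcal{I}^{E^p}}\mathscr{P}\delta^{(m)}\bar{\epsilon}_H\vert_{E^f}\,Meas(E^f)$, I would arrive at
\[
\delta^{(m)}\bar{\epsilon}_H\vert_{E^p}\,Meas(E^p)=\delta^{(m)}\bar{\sigma}\vert_{E^p}\sum_{E^f\in\mathcal{I}^{E^p}}\frac{Meas(E^f)}{\eta_{E^f}}+\sum_{E^f\in\mathcal{I}^{E^p}}\frac{\alpha_{E^f}}{\eta_{E^f}}\,\delta^{(m)}p_h\vert_{E^f}\,Meas(E^f).
\]
On the other hand, the coarse-scale constraint \eqref{use1} gives $\delta^{(m)}\bar{\epsilon}_H\vert_{E^p}\,Meas(E^p)=\frac{1}{\eta_{E^p}}\big(\delta^{(m)}\bar{\sigma}\vert_{E^p}+\alpha_{E^p}\mathscr{R}\delta^{(m)}p_h\big)Meas(E^p)$. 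Equating the two expressions, the terms carrying $\delta^{(m)}\bar{\sigma}\vert_{E^p}$ cancel precisely when $\frac{Meas(E^p)}{\eta_{E^p}}=\sum_{E^f\in\mathcal{I}^{E^p}}\frac{Meas(E^f)}{\eta_{E^f}}$, i.e. when $\eta_{E^p}$ is the volume-weighted harmonic mean of the fine-scale $\eta_{E^f}$ — the Reuss-type upscaling anticipated in the Introduction. What remains after the cancellation is $\frac{\alpha_{E^p}}{\eta_{E^p}}\,\mathscr{R}\delta^{(m)}p_h\,Meas(E^p)=\sum_{E^f\in\mathcal{I}^{E^p}}\frac{\alpha_{E^f}}{\eta_{E^f}}\,\delta^{(m)}p_h\,Meas(E^f)$, and dividing through by $\frac{\alpha_{E^p}}{\eta_{E^p}}Meas(E^p)$ delivers the claimed expression for $\mathscr{R}\delta^{(m)}p_h$; together with the resulting form of $\mathscr{P}\delta^{(m)}\bar{\epsilon}_H$ this is the raw material for verifying conditions \eqref{condition1} and \eqref{condition2}.

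The step I expect to be the main obstacle is the bookkeeping that forces the $\bar{\sigma}$-terms to drop out. One has to fix, in a mutually compatible way, (i) the precise sense in which the coarse-scale $\bar{\sigma}$ of \eqref{use1} is a single per-element constant, and hence constant-extends onto the fine elements it contains, (ii) the normalisation under which $\mathscr{P}$ is conservative over each $E^p$, and (iii) the harmonic-mean definition of $\eta_{E^p}$, and then confirm that these three choices are simultaneously consistent with \eqref{use1} and \eqref{use2}. Everything downstream — the summation over $\mathcal{I}^{E^p}$ and the cancellation — is elementary manipulation of element-wise constants together with $\sum_{E^f\in\mathcal{I}^{E^p}}Meas(E^f)=Meas(E^p)$.
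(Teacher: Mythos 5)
There is a genuine gap, and it sits exactly where you predicted the obstacle would be. Your elimination scheme treats $\delta^{(m)}\bar{\sigma}$ --- the change over the \emph{full} coupling iteration --- as a single per-coarse-element constant that extends unchanged to every $E^f\subset E^p$, and then cancels it between \eqref{use1} and \eqref{use2}. Neither ingredient is available. First, the full-iteration change of $\bar{\sigma}$ is neither zero nor shared across scales: on $E^f$ it is $\eta_{E^f}\mathscr{P}\delta^{(m)}\bar{\epsilon}_H-\alpha_{E^f}\delta^{(m)}p_h$, with fine-scale coefficients and a pressure increment that varies from one $E^f$ to another inside $E^p$, while on $E^p$ it is $\eta_{E^p}\delta^{(m)}\bar{\epsilon}_H-\alpha_{E^p}\mathscr{R}\delta^{(m)}p_h$; indeed in Theorem \ref{map} the quantities $\Vert\delta^{(m)}\bar{\sigma}\Vert_{E^f}$ enter as independent fine-scale objects, not as restrictions of a coarse constant. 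Second, your cancellation of the $\bar{\sigma}$-terms only goes through if $Meas(E^p)/\eta_{E^p}=\sum_{E^f\in\mathcal{I}^{E^p}}Meas(E^f)/\eta_{E^f}$, i.e.\ the harmonic-mean upscaling \eqref{satisfy3}; but that relation is derived only in the \emph{next} corollary, as a consequence of condition \eqref{condition1}, and that derivation itself uses the restriction formula you are trying to prove here. Importing it as a hypothesis makes the argument circular with respect to the paper's logical structure (or, at best, proves a different, conditional statement).

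What the corollary actually rests on, and what your proposal never uses, is the decoupling constraint in its operational form: $\bar{\sigma}$ is held fixed \emph{during the flow solve}, so the flow-solve increments satisfy $\int_{E^p}\delta_f^{(m)}\bar{\sigma}=0$ and $\int_{E^f}\delta_f^{(m)}\bar{\sigma}=0$ elementwise at both scales --- no cross-scale identification of $\bar{\sigma}$ is needed, because both increments vanish separately. Combined with the fact that the pressure is frozen during the poromechanics solve, so that $\delta_f^{(m)}p_h=\delta^{(m)}p_h$ and $\mathscr{R}\delta_f^{(m)}p_h=\mathscr{R}\delta^{(m)}p_h$ (equations \eqref{use3}--\eqref{use4}), these give \eqref{feku} and \eqref{feku2}; the volume-consistency identity $\int_{E^p}\delta_f^{(m)}\bar{\epsilon}_H=\sum_{E^f\in\mathcal{I}^{E^p}}\int_{E^f}\mathscr{P}\delta_f^{(m)}\bar{\epsilon}_H$ (your conservativeness of $\mathscr{P}$, but applied to the flow-solve increments) then yields the restriction formula directly, with no hypothesis on $\eta_{E^p}$ and no cancellation of $\bar{\sigma}$-terms. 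If you repair your argument by replacing $\delta^{(m)}$ with $\delta_f^{(m)}$ throughout and using that both scales' $\bar{\sigma}$-increments vanish individually, the harmonic mean drops out of your hypotheses and reappears, correctly, as the output of the subsequent corollary.
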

\begin{proof}
$\bullet$ \textbf{Step 1: Using the fact that pore pressure is frozen during the poromechanical solve}\\
Since the pore pressure is frozen during the poromechanical solve, the total pore pressure change in a coupling iteration is the same as the pore pressure change calculated during the flow solve in the coupling iteration as follows
\begin{align}
\label{use3}
&\mathscr{R}\delta_f^{(m)} p_h = \mathscr{R}\delta^{(m)} p_h\qquad(\forall\,\,E^p\in \mathscr{T}_H^p)\\
\label{use4}
&\delta_f^{(m)} p_h = \delta^{(m)} p_h\qquad(\forall\,\,E^f\in \mathscr{T}_h^f)
\end{align}
$\bullet$ \textbf{Step 2: Applying the decoupling constraint on both scales}\\
Now, the decoupling constraint implies that there is no change in the measure of the mean stress of the system during the flow solve. This naturally implies that
\begin{align}
\tag{$\forall\,\,E^p\in \mathscr{T}_H^p$}
\int\limits_{E^p} \delta_f^{(m)}\bar{\sigma}=0
\end{align}
In lieu of \eqref{use1}, we write the above as
\begin{align}
\tag{$\forall\,\,E^p\in \mathscr{T}_H^p$}
\int\limits_{E^p} (\eta_{E^p}\delta_f^{(m)}\bar{\epsilon}_H-\alpha_{E^p} \mathscr{R}\delta_f^{(m)}p_h) = 0
\end{align}
which, in lieu of \eqref{use3}, can be written as
\begin{align}
\label{feku}
\int\limits_{E^p}\delta_f^{(m)}\bar{\epsilon}_H = \frac{\alpha_{E^p}}{\eta_{E^p}}\mathscr{R}\delta^{(m)}p_h Meas(E^p)\qquad(\forall\,\,E^p\in \mathscr{T}_H^p)
\end{align}
Denoting $\int\limits_{E^f}
\delta_f^{(m)}\mathscr{P}\bar{\epsilon}_H$ is the change in volume of each element $E^f$ of $\mathcal{I}^{E^p}$, we now impose the decoupling constraint on each element $E^f$ of $\mathcal{I}^{E^p}$ as follows
\begin{align}
\nonumber
\int\limits_{E^f} \delta_f^{(m)}\bar{\sigma}\equiv \int\limits_{E^f} (\eta_{E^f}\mathscr{P}\delta_f^{(m)}\bar{\epsilon}_H-\alpha_{E^f} \delta^{(m)}p_h)=0\qquad(\forall\,\,E^f\in \mathcal{I}^{E^p})
\end{align}
which, in lieu of \eqref{use4}, can be written as 
\begin{align}
\label{feku2}
\int\limits_{E^f}
\mathscr{P}\delta_f^{(m)}\bar{\epsilon}_H=\frac{\alpha_{E^f}}{\eta_{E^f}}\delta^{(m)}p_hMeas(E^f)\qquad(\forall\,\,E^f\in \mathscr{T}_h^f)
\end{align} 
$\bullet$ \textbf{Step 3: Using the fact that the change in volume measured on both scales should be identical}\\
The term $\int\limits_{E^p}\delta_f^{(m)}\bar{\epsilon}_H$ is the change in volume of $E^p$ during the flow solve in the $(m+1)^{th}$ coupling iteration. This naturally equates the sum of corresponding changes in volumes of the elements of $\mathcal{I}^{E^p}$ as follows
\begin{align}
\label{feku1}
\int\limits_{E^p}\delta_f^{(m)}\bar{\epsilon}_H \equiv \sum\limits_{E^f\in \mathcal{I}^{E^p}}\int\limits_{E^f}
\mathscr{P}\delta_f^{(m)}\bar{\epsilon}_H\qquad(\forall\,\,E^p\in \mathscr{T}_H^p)
\end{align}
From \eqref{feku2} and \eqref{feku1}, we get
\begin{align}
\label{feku3}
\int\limits_{E^p}\delta_f^{(m)}\bar{\epsilon}_H = \sum\limits_{E^f\in \mathcal{I}^{E^p}}\frac{\alpha_{E^f}}{\eta_{E^f}}\delta^{(m)}p_h Meas(E^f)\qquad(\forall\,\,E^p\in \mathscr{T}_H^p)
\end{align}
From \eqref{feku} and \eqref{feku3}, we get
\begin{align*}
\sum\limits_{E^f\in \mathcal{I}^{E^p}}\frac{\alpha_{E^f}}{\eta_{E^f}}\delta^{(m)}p_h Meas(E^f)= \frac{\alpha_{E^p}}{\eta_{E^p}}\mathscr{R}\delta^{(m)}p_h Meas(E^p)\qquad(\forall\,\,E^p\in \mathscr{T}_H^p)
\end{align*}
which results in
\begin{align}
\label{feku4}
\mathscr{R}\delta^{(m)}p_h
=\frac{\eta_{E^p}}{\alpha_{E^p}}\sum\limits_{E^f\in \mathcal{I}^{E^p}}\frac{\alpha_{E^f}}{\eta_{E^f}}\delta^{(m)}p_h \frac{Meas(E^f)}{Meas(E^p)} \qquad(\forall\,\,E^p\in \mathscr{T}_H^p)
\end{align}
\end{proof}
\begin{corollary}
Satisfaction of the condition \eqref{condition1} leads to the following expressions for the effective bulk moduli for the coarse scale poromechanical solve  
\begin{align*}
\tag{$\forall\,\,E^p\in \mathscr{T}_H^p$}
&\eta_{E^p}=\frac{1}{\sum\limits_{E^f\in \mathcal{I}^{E^p}}\frac{1}{\eta_{E^f}}\frac{Meas(E^f)}{Meas(E^p)}}
\end{align*}
and the following expressions for the downscaled volumetric strains 
\begin{align*}
\tag{$\forall\,\,E^f\in \mathcal{I}^{E^p}\,\,\forall\,\,E^p\in \mathscr{T}_H^p$}
&\mathscr{P}\delta^{(m)}\bar{\epsilon}_H
=\frac{\eta_{E^p}}{\eta_{E^f}}\frac{1}{Meas(E^p)}\int\limits_{E^p}\delta^{(m)} \bar{\epsilon}_H
\end{align*}
\end{corollary}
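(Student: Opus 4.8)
The plan is to localize the global identity \eqref{condition1} to each coarse element and then use the freedom in the fine‑scale pressure degrees of freedom. First I would record that every member of $W_h$ is piecewise constant on $\mathscr{T}_h^f$, so on each flow element $E^f$ the quantities $\delta^{(m)} p_h$ and $\mathscr{P}\delta^{(m)}\bar{\epsilon}_H$ are scalars, and likewise $\mathscr{R}\delta^{(m)} p_h\in W_H$ is a scalar on each $E^p$, while the trilinear field $\delta^{(m)}\bar{\epsilon}_H=\nabla\cdot\delta^{(m)}\mathbf{u}_H$ enters only through $\int_{E^p}\delta^{(m)}\bar{\epsilon}_H$. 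Rewriting the two sums in \eqref{condition1} over the partition $E^p=\bigcup_{E^f\in\mathcal{I}^{E^p}}E^f$, the identity reads $\sum_{E^p}\big[\sum_{E^f\in\mathcal{I}^{E^p}}\alpha_{E^f}(\mathscr{P}\delta^{(m)}\bar{\epsilon}_H)(\delta^{(m)} p_h)\,Meas(E^f)-\alpha_{E^p}(\mathscr{R}\delta^{(m)} p_h)\int_{E^p}\delta^{(m)}\bar{\epsilon}_H\big]=0$, and since the contraction estimate of Theorem \ref{map} must hold for arbitrary iteration data, I would demand that the bracket vanish for each $E^p$ separately; this is the local form of the first condition.

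Next I would substitute the expression for $\mathscr{R}\delta^{(m)} p_h$ obtained in the preceding Corollary, equation \eqref{feku4}, into this local identity. The second term becomes $\eta_{E^p}\big(\int_{E^p}\delta^{(m)}\bar{\epsilon}_H\big)\sum_{E^f\in\mathcal{I}^{E^p}}\frac{\alpha_{E^f}}{\eta_{E^f}}(\delta^{(m)} p_h)\frac{Meas(E^f)}{Meas(E^p)}$, so the local identity rearranges to $\sum_{E^f\in\mathcal{I}^{E^p}}\big[\alpha_{E^f}(\mathscr{P}\delta^{(m)}\bar{\epsilon}_H)Meas(E^f)-\eta_{E^p}\big(\int_{E^p}\delta^{(m)}\bar{\epsilon}_H\big)\frac{\alpha_{E^f}}{\eta_{E^f}}\frac{Meas(E^f)}{Meas(E^p)}\big](\delta^{(m)} p_h)=0$. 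Because the fine‑grid pressure values $\delta^{(m)} p_h|_{E^f}$ vary independently as the data ranges, each bracket must vanish; dividing through by the common factor $\alpha_{E^f}Meas(E^f)$ gives $\mathscr{P}\delta^{(m)}\bar{\epsilon}_H|_{E^f}=\frac{\eta_{E^p}}{\eta_{E^f}}\frac{1}{Meas(E^p)}\int_{E^p}\delta^{(m)}\bar{\epsilon}_H$, which is the claimed downscaling formula for the volumetric strain.

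Finally, to pin down $\eta_{E^p}$ I would impose the volume‑conservation property of the prolongation already used in \eqref{feku1}, namely $\int_{E^p}\delta^{(m)}\bar{\epsilon}_H=\sum_{E^f\in\mathcal{I}^{E^p}}\int_{E^f}\mathscr{P}\delta^{(m)}\bar{\epsilon}_H=\sum_{E^f\in\mathcal{I}^{E^p}}(\mathscr{P}\delta^{(m)}\bar{\epsilon}_H|_{E^f})Meas(E^f)$. Inserting the formula just derived, the factor $\int_{E^p}\delta^{(m)}\bar{\epsilon}_H$ cancels on both sides for any nonzero coarse strain increment, leaving $1=\eta_{E^p}\sum_{E^f\in\mathcal{I}^{E^p}}\frac{1}{\eta_{E^f}}\frac{Meas(E^f)}{Meas(E^p)}$, i.e. $\eta_{E^p}$ is the volume‑weighted harmonic mean of the fine‑scale $\eta_{E^f}$, as asserted.

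I expect the main obstacle to be justifying the two reductions: first, collapsing the global sum \eqref{condition1} to an element‑by‑element requirement, and second, treating the fine‑scale pressures $\delta^{(m)} p_h|_{E^f}$ (and the local strain increment $\int_{E^p}\delta^{(m)}\bar{\epsilon}_H$) as free parameters so that coefficients may be matched. Both are legitimate precisely because the operators $\mathscr{R}$, $\mathscr{P}$ and the coarse moduli $\eta_{E^p}$ are being \emph{designed} to make the bracketed terms vanish identically over all admissible data, not merely for one particular iterate; once this design viewpoint is adopted, everything else is the bookkeeping sketched above.
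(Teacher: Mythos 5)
Your proposal is correct and follows essentially the same route as the paper: recast both sums of \eqref{condition1} over the nested partition $E^p=\bigcup_{E^f\in\mathcal{I}^{E^p}}E^f$, substitute the upscaled-pressure formula \eqref{feku4} from the preceding corollary, match coefficients of the independent fine-scale pressure increments (the paper phrases this as linear independence of the pressure basis, your ``design viewpoint'' is the same justification) to obtain \eqref{satisfy1}, and then use the volume-matching identity $\int_{E^p}\delta^{(m)}\bar{\epsilon}_H=\sum_{E^f\in\mathcal{I}^{E^p}}\mathscr{P}\delta^{(m)}\bar{\epsilon}_H\,Meas(E^f)$ to deduce the harmonic-mean expression \eqref{satisfy3}. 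Your intermediate localization to each coarse element and the implicit assumptions $\alpha_{E^f}\neq 0$, $\int_{E^p}\delta^{(m)}\bar{\epsilon}_H\neq 0$ are harmless and mirror what the paper does tacitly.
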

\begin{proof}
$\bullet$ \textbf{Step 1: Recasting the first term on LHS of \eqref{condition1}}\\
We start by modifying the first term on LHS of \eqref{condition1} as follows
\begin{align}
\label{condition1r}
&\sum\limits_{E^f\in \mathscr{T}_h^f} \alpha_{E^f}(\mathscr{P}\delta^{(m)}\bar{\epsilon}_H,\delta^{(m)} p_h)_{E^f}= \sum\limits_{E^f\in \mathscr{T}_h^f}\alpha_{E^f}\delta^{(m)} p_h \mathscr{P}\delta^{(m)}\bar{\epsilon}_H Meas(E^f)
\end{align}
where we note that $\delta^{(m)}p_h\in W_h$. Since a flow element $E^f\in \mathscr{T}_h^f$ in uniquely associated with a poromechanical element $E^p$ via $\mathcal{I}^{E^p}$, we can write
\begin{align}
\nonumber
&\sum\limits_{E^f\in \mathscr{T}_h^f}\alpha_{E^f}\delta^{(m)} p_h\mathscr{P}\delta^{(m)}\bar{\epsilon}_H Meas(E^f)=\sum\limits_{E^p\in \mathscr{T}_H^p} \sum\limits_{E^f\in \mathcal{I}^{E^p}}\alpha_{E^f}\delta^{(m)} p_h\mathscr{P}\delta^{(m)}\bar{\epsilon}_H Meas(E^f)
\end{align}
In lieu of the above, we write \eqref{condition1r} as
\begin{align}
\label{condition1m}
&\sum\limits_{E^f\in \mathscr{T}_h^f} \alpha_{E^f}(\mathscr{P}\delta^{(m)}\bar{\epsilon}_H,\delta^{(m)} p_h)_{E^f}=\sum\limits_{E^p\in \mathscr{T}_H^p} \sum\limits_{E^f\in \mathcal{I}^{E^p}} \alpha_{E^f}\delta^{(m)} p_h\mathscr{P}\delta^{(m)}\bar{\epsilon}_H Meas(E^f)
\end{align}
$\bullet$ \textbf{Step 2: Recasting the second term on LHS of \eqref{condition1}}\\
Next, we modify the second term on LHS of \eqref{condition1} as follows
\begin{align}
\label{condition1mm}
&\sum\limits_{E^p\in \mathscr{T}_H^p}\alpha_{E^p}(\delta^{(m)} \bar{\epsilon}_H,\mathscr{R}\delta^{(m)} p_h)_{E^p}\equiv \sum\limits_{E^p\in \mathscr{T}_H^p}2\alpha_{E^p} \mathscr{R}\delta^{(m)} p_h\int\limits_{E^p}\delta^{(m)} \bar{\epsilon}_H
\end{align}
where we note that $\mathscr{R}\delta^{(m)}p_h\in W_H$. 
%The volumetric strain $\bar{\epsilon}_H=\nabla \cdot \mathbf{u}_H$ is obtained by post-processing the coarse scale displacement solution $\mathbf{u}_H$. Since $\mathbf{u}_H$ is constructed using trilinear basis functions, we have
%\begin{align}
%\label{invoke2}
%\int\limits_{E^p}\nabla \cdot \mathbf{u}_H \equiv  \int\limits_{E^p} \bar{\epsilon}_H = \bar{\epsilon}_{E^p}Meas(E^p)\qquad(\forall\,\,E^p\in \mathscr{T}_H^p)
%\end{align}
%A proof of \eqref{invoke2} is given in \ref{center}. \eqref{invoke2} naturally leads to
%\begin{align}
%\label{centerman}
%\int\limits_{E^p}\delta^{(m)} \bar{\epsilon}_H=\delta^{(m)} \bar{\epsilon}_{E^p}Meas(E^p)\qquad(\forall\,\,E^p\in \mathscr{T}_H^p)
%\end{align}
%In lieu of the above, we get
%\begin{align}
%\tag{$\forall\,\,E^p\in \mathscr{T}_H^p$}
%\int\limits_{E^p}2\alpha_{E^p} \delta^{(m)} \bar{\epsilon}_H \mathscr{R}\delta^{(m)} p_h = 2\alpha_{E^p} \mathscr{R}\delta^{(m)} p_h \delta^{(m)} \bar{\epsilon}_{E^p} Meas(E^p)
%\end{align}
In lieu of \eqref{condition1m} and \eqref{condition1mm}, the first condition given by \eqref{condition1} is rewritten as
\begin{align}
\label{condition1l}
&\sum\limits_{E^p\in \mathscr{T}_H^p} \sum\limits_{E^f\in \mathcal{I}^{E^p}}\alpha_{E^f}\delta^{(m)} p_h\mathscr{P}\delta^{(m)}\bar{\epsilon}_H Meas(E^f)=\sum\limits_{E^p\in \mathscr{T}_H^p}\alpha_{E^p} \mathscr{R}\delta^{(m)} p_h\int\limits_{E^p}\delta^{(m)} \bar{\epsilon}_H
\end{align}
\noindent $\bullet$ \textbf{Step 3: Substituting the expression for upscaled pore pressures}\\
Substituting the expression \eqref{feku4} for the upscaled pore pressure in \eqref{condition1l}, we get
\begin{align*}
\nonumber
&\sum\limits_{E^p\in \mathscr{T}_H^p} \sum\limits_{E^f\in \mathcal{I}^{E^p}}\alpha_{E^f}\delta^{(m)} p_h\mathscr{P}\delta^{(m)}\bar{\epsilon}_H Meas(E^f)\\
&=\sum\limits_{E^p\in \mathscr{T}_H^p}\alpha_{E^p}\overbrace{\frac{\eta_{E^p}}{\alpha_{E^p}}\sum\limits_{E^f\in \mathcal{I}^{E^p}}\frac{\alpha_{E^f}}{\eta_{E^f}}\delta^{(m)}p_h \frac{Meas(E^f)}{Meas(E^p)}}^{\mathscr{R}\delta^{(m)} p_h}\int\limits_{E^p}\delta^{(m)} \bar{\epsilon}_H
\end{align*}
which implies that
\begin{align}
\nonumber
&\sum\limits_{E^p\in \mathscr{T}_H^p} \sum\limits_{E^f\in \mathcal{I}^{E^p}} 
\bigg(\mathscr{P}\delta^{(m)}\bar{\epsilon}_H 
-\frac{\eta_{E^p}}{\eta_{E^f}}\frac{1}{Meas(E^p)}\int\limits_{E^p}\delta^{(m)} \bar{\epsilon}_H\bigg)
\alpha_{E^f}\delta^{(m)}p_h Meas(E^f)=0
\end{align}
which, in lieu of the linear independence of the basis $p_h\,\,(\forall\,\,E^f\in \mathscr{T}_h^f)$ of the pressure space on the fine scale flow grid, implies that
\begin{align}
\nonumber
&\mathscr{P}\delta^{(m)}\bar{\epsilon}_H
-\frac{\eta_{E^p}}{\eta_{E^f}}\frac{1}{Meas(E^p)}\int\limits_{E^p}\delta^{(m)} \bar{\epsilon}_H=0\qquad (\forall\,\,E^f\in \mathcal{I}^{E^p}\,\,\forall\,\,E^p\in \mathscr{T}_H^p)
\end{align}
implying that
\begin{align}
\label{satisfy1}
&\mathscr{P}\delta^{(m)}\bar{\epsilon}_H
=\frac{\eta_{E^p}}{\eta_{E^f}}\frac{1}{Meas(E^p)}\int\limits_{E^p}\delta^{(m)} \bar{\epsilon}_H\qquad (\forall\,\,E^f\in \mathcal{I}^{E^p}\,\,\forall\,\,E^p\in \mathscr{T}_H^p)
\end{align}
%$\bullet$ \textbf{Step 2: Second condition}\\
%We know from \ref{squares} that 
%\begin{align}
%\label{square}
%\int\limits_{E^p}\eta_{E^p}\Vert \delta^{(m)} \bar{\epsilon}_H \Vert^2-\int\limits_{E^p}\eta_{E^p}(\delta^{(m)} \bar{\epsilon}_{E^p})^2>0\qquad (\forall\,\,E^p\in \mathscr{T}_H^p)
%\end{align}
%For the sake of clarity, we rewrite the second condition given by \eqref{condition2} as
%\begin{align}
%\label{condition2m}
%&\sum\limits_{E^p\in \mathscr{T}_H^p} \eta_{E^p}\Vert \delta^{(m)} \bar{\epsilon}_H \Vert^2-\sum\limits_{E^f\in \mathscr{T}_h^f}\eta_{E^f}\Vert\mathscr{P}\delta^{(m)}\bar{\epsilon}_H\Vert^2> 0
%\end{align}
%It is clear from \eqref{square} and \eqref{condition2m} that the enforcement of
%\begin{align}
%\label{choice}
%\int\limits_{E^p}\eta_{E^p}(\delta^{(m)} \bar{\epsilon}_{E^p})^2=\sum\limits_{E^f\in \mathcal{I}^{E^p}}\int\limits_{E^f}\eta_{E^f}\Vert\mathscr{P}\delta^{(m)}\bar{\epsilon}_H\Vert^2\qquad (\forall\,\,E^p\in \mathscr{T}_H^p)
%\end{align}
%would satisfy the second condition given by \eqref{condition2m}. Now, \eqref{choice} can also be written as
%\begin{align}
%\label{satisfy2}
%(\delta^{(m)} \bar{\epsilon}_{E^p})^2=\sum\limits_{E^f\in \mathcal{I}^{E^p}}\frac{\eta_{E^f}}{\eta_{E^p}}\Vert\mathscr{P}\delta^{(m)}\bar{\epsilon}_H\Vert^2
%\frac{Meas(E^f)}{Meas(E^p)}\qquad (\forall\,\,E^p\in \mathscr{T}_H^p)
%\end{align}
$\bullet$ \textbf{Step 4: Using the fact that the change in volume measured on both scales should be identical}\\
The change in volume of $E^p$ over the $(m+1)^{th}$ coupling iteration equates the sum of corresponding changes in volumes of the elements of $\mathcal{I}^{E^p}$ as follows
\begin{align}
\label{satisfy2}
\int\limits_{E^p}\delta^{(m)}\bar{\epsilon}_H 
=\sum\limits_{E^f\in \mathcal{I}^{E^p}}\int\limits_{E^f}
\mathscr{P}\delta^{(m)}\bar{\epsilon}_H=\sum\limits_{E^f\in \mathcal{I}^{E^p}}\mathscr{P}\delta^{(m)}\bar{\epsilon}_H Meas(E^f)\qquad (\forall\,\,E^p\in \mathscr{T}_H^p)
\end{align}
In lieu of \eqref{satisfy1} and \eqref{satisfy2}, we get
\begin{align}
\tag{$\forall\,\,E^p\in \mathscr{T}_H^p$}
\int\limits_{E^p}\delta^{(m)}\bar{\epsilon}_H 
= \int\limits_{E^p}\delta^{(m)} \bar{\epsilon}_H\sum\limits_{E^f\in \mathcal{I}^{E^p}}\frac{\eta_{E^p}}{\eta_{E^f}}\frac{Meas(E^f)}{Meas(E^p)}
\end{align}
which finally leads to
\begin{align}
\label{satisfy3}
\frac{1}{\eta_{E^p}}
= \sum\limits_{E^f\in \mathcal{I}^{E^p}}\frac{1}{\eta_{E^f}}\frac{Meas(E^f)}{Meas(E^p)}\qquad (\forall\,\,E^p\in \mathscr{T}_H^p)
\end{align}
\end{proof}
\begin{corollary}
The Cauchy-Schwartz inequality, along with the obtained expressions for effective coarse scale bulk moduli \eqref{satisfy3} and downscaled volumetric strains \eqref{satisfy1}, guarantees the satisfaction of the condition \eqref{condition2}.
\end{corollary}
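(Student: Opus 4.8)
The plan is to collapse the fine--scale sum appearing in \eqref{condition2} into a sum over the coarse grid by feeding in the downscaling rule \eqref{satisfy1} and the harmonic-averaging identity \eqref{satisfy3}, after which the remaining inequality reduces to one invocation of the Cauchy-Schwartz inequality per coarse element.

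First I would introduce, for each $E^p\in\mathscr{T}_H^p$, the cell average $c_{E^p}:=\frac{1}{Meas(E^p)}\int_{E^p}\delta^{(m)}\bar{\epsilon}_H$, so that \eqref{satisfy1} becomes $\mathscr{P}\delta^{(m)}\bar{\epsilon}_H\big|_{E^f}=\frac{\eta_{E^p}}{\eta_{E^f}}\,c_{E^p}$ for every $E^f\in\mathcal{I}^{E^p}$ (recall $\mathscr{P}\delta^{(m)}\bar{\epsilon}_H$ is piecewise constant on $\mathscr{T}_h^f$). Next, using that each fine element sits inside a unique coarse element, I would regroup
\begin{align*}
\sum_{E^f\in\mathscr{T}_h^f}\eta_{E^f}\Vert\mathscr{P}\delta^{(m)}\bar{\epsilon}_H\Vert^2_{E^f}
=\sum_{E^p\in\mathscr{T}_H^p}\eta_{E^p}^2\,c_{E^p}^2\sum_{E^f\in\mathcal{I}^{E^p}}\frac{Meas(E^f)}{\eta_{E^f}}
=\sum_{E^p\in\mathscr{T}_H^p}\eta_{E^p}\,c_{E^p}^2\,Meas(E^p),
\end{align*}
the last step being exactly \eqref{satisfy3} rewritten as $\sum_{E^f\in\mathcal{I}^{E^p}}Meas(E^f)/\eta_{E^f}=Meas(E^p)/\eta_{E^p}$.

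It then remains to show $\sum_{E^p\in\mathscr{T}_H^p}\eta_{E^p}\Vert\delta^{(m)}\bar{\epsilon}_H\Vert^2_{E^p}\ge\sum_{E^p\in\mathscr{T}_H^p}\eta_{E^p}\,c_{E^p}^2\,Meas(E^p)$, and this is precisely Cauchy-Schwartz applied on each $E^p$ to $\delta^{(m)}\bar{\epsilon}_H$ against the constant $1$, namely $\big(\int_{E^p}\delta^{(m)}\bar{\epsilon}_H\big)^2\le Meas(E^p)\,\Vert\delta^{(m)}\bar{\epsilon}_H\Vert^2_{E^p}$, i.e.\ $c_{E^p}^2\,Meas(E^p)\le\Vert\delta^{(m)}\bar{\epsilon}_H\Vert^2_{E^p}$; multiplying by $\eta_{E^p}>0$ and summing over $E^p$ closes the argument. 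I do not anticipate any genuine difficulty beyond the index bookkeeping in the regrouping step. The one conceptual point worth noting is that $\delta^{(m)}\bar{\epsilon}_H=\nabla\cdot\delta^{(m)}\mathbf{u}_H$ is in general non-constant on a coarse element (the displacement is trilinear), so an inequality rather than an equality is unavoidable, with the slack measuring the sub-coarse-element oscillation of the volumetric strain; and the inner-product normalisation must be carried consistently so that the cell-average algebra lines up with the factor appearing in \eqref{condition1mm}.
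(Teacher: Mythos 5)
Your proposal is correct and follows essentially the same route as the paper: substitute the downscaling rule \eqref{satisfy1}, collapse the fine-scale sum per coarse element using the harmonic-mean identity \eqref{satisfy3}, and finish with Cauchy--Schwartz against the constant function $1$ on each $E^p$. Introducing the cell average $c_{E^p}$ is merely cleaner bookkeeping (and arguably tidier than the paper's insertion of $\vert\delta^{(m)}\bar{\epsilon}_H\vert$ inside the integral), but the argument is the same.
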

\begin{proof}
$\bullet$ \textbf{Step 1: Recasting \eqref{condition2} in lieu of \eqref{satisfy1} and \eqref{satisfy3}}\\
The condition \eqref{condition2} given by
\begin{align*}
&\sum\limits_{E^p\in \mathscr{T}_H^p} \eta_{E^p}\Vert \delta^{(m)} \bar{\epsilon}_H \Vert^2_{E^p}
-\sum\limits_{E^f\in \mathscr{T}_h^f}
\eta_{E^f}
\Vert\mathscr{P}\delta^{(m)}\bar{\epsilon}_H\Vert^2_{E^f}\geq 0
\end{align*}
can be written as
\begin{align*}
&\sum\limits_{E^p\in \mathscr{T}_H^p} \eta_{E^p}\Vert \delta^{(m)} \bar{\epsilon}_H \Vert^2_{E^p}
-\sum\limits_{E^f\in \mathscr{T}_H^f} 
\eta_{E^f}
\vert\mathscr{P}\delta^{(m)}\bar{\epsilon}_H\vert^2 Meas(E^f)\geq 0
\end{align*}
which can also be written as
\begin{align*}
&\sum\limits_{E^p\in \mathscr{T}_H^p}\bigg[ \eta_{E^p}\Vert \delta^{(m)} \bar{\epsilon}_H \Vert^2_{E^p}
-\sum\limits_{E^f\in \mathcal{I}^{E^p}} 
\eta_{E^f}
\vert\mathscr{P}\delta^{(m)}\bar{\epsilon}_H\vert^2 Meas(E^f)\bigg]\geq 0
\end{align*}
which, in lieu of \eqref{satisfy1}, can also be written as
\begin{align*}
&\sum\limits_{E^p\in \mathscr{T}_H^p}\bigg[ \eta_{E^p}\Vert \delta^{(m)} \bar{\epsilon}_H \Vert^2_{E^p}
-\sum\limits_{E^f\in \mathcal{I}^{E^p}} 
\eta_{E^f}
\overbrace{\bigg(\frac{\eta_{E^p}}{\eta_{E^f}}\frac{1}{Meas(E^p)}\int\limits_{E^p}\delta^{(m)} \bar{\epsilon}_H\bigg)^2}^{\vert\mathscr{P}\delta^{(m)}\bar{\epsilon}_H\vert^2} Meas(E^f)\bigg]\geq 0
\end{align*}
which can also be written as
\begin{align*}
&\sum\limits_{E^p\in \mathscr{T}_H^p}\bigg[ \eta_{E^p}\Vert \delta^{(m)} \bar{\epsilon}_H \Vert^2_{E^p}
-\frac{\eta_{E^p}}{Meas(E^p)}\bigg(\int\limits_{E^p} \vert \delta^{(m)}\bar{\epsilon}_H\vert\bigg)^2
\eta_{E^p}\sum\limits_{E^f\in \mathcal{I}^{E^p}}\frac{1}{\eta_{E^f}}\frac{Meas(E^f)}{Meas(E^p)}
\bigg]\geq 0
\end{align*}
which, in lieu of \eqref{satisfy3}, can be written as
\begin{align*}
&\sum\limits_{E^p\in \mathscr{T}_H^p}\bigg[ \eta_{E^p}\Vert \delta^{(m)} \bar{\epsilon}_H \Vert^2_{E^p}
-\frac{\eta_{E^p}}{Meas(E^p)}\bigg(\int\limits_{E^p} \vert \delta^{(m)}\bar{\epsilon}_H\vert\bigg)^2
\eta_{E^p}\frac{1}{\eta_{E^p}}
\bigg]\geq 0
\end{align*}
which can be finally written as
\begin{align}
\label{satisfy4}
&\sum\limits_{E^p\in \mathscr{T}_H^p}\bigg[ \eta_{E^p}\Vert \delta^{(m)} \bar{\epsilon}_H \Vert^2_{E^p}
-\frac{\eta_{E^p}}{Meas(E^p)}\bigg(\int\limits_{E^p} \vert \delta^{(m)}\bar{\epsilon}_H\vert\bigg)^2
\bigg]\geq 0
\end{align}
$\bullet$ \textbf{Step 2: Applying the Cauchy-Schwartz inequality}\\
The Cauchy-Schwartz inequality (see \citet{functionaloden}) states that if $S$ is a measurable subset of $\mathbb{R}^3$ and $f$ and $g$ are measurable real-valued or complex-valued functions on $S$, then the following is true
\begin{align*}
\bigg(\int\limits_S \vert fg\vert\bigg)^2\leq  \Vert f\Vert^2_S \Vert g\Vert^2_S
\end{align*}
Replacing $S$ by $E^p$, $f$ by $\delta^{(m)}\bar{\epsilon}_H$ and $g$ by $1$, we get
\begin{align*}
\tag{$\forall\,\,E^p\in \mathscr{T}_H^p$}
\Vert \delta^{(m)}\bar{\epsilon}_H\Vert^2_{E^p} \geq \frac{1}{Meas(E^p)}\bigg(\int\limits_{E^p} \vert \delta^{(m)}\bar{\epsilon}_H\vert\bigg)^2
\end{align*}
which can be written as
\begin{align*}
\tag{$\forall\,\,E^p\in \mathscr{T}_H^p$}
\eta_{E^p}\Vert \delta^{(m)} \bar{\epsilon}_H \Vert^2_{E^p}
-\frac{\eta_{E^p}}{Meas(E^p)}\bigg(\int\limits_{E^p} \vert \delta^{(m)}\bar{\epsilon}_H\vert\bigg)^2\geq 0
\end{align*}
which implies that
\begin{align}
\nonumber
&\sum\limits_{E^p\in \mathscr{T}_H^p}\bigg[ \eta_{E^p}\Vert \delta^{(m)} \bar{\epsilon}_H \Vert^2_{E^p}
-\frac{\eta_{E^p}}{Meas(E^p)}\bigg(\int\limits_{E^p} \vert \delta^{(m)}\bar{\epsilon}_H\vert\bigg)^2
\bigg]\geq 0
\end{align}
which is identical to \eqref{satisfy4}. Thus, provided the downscaled volumetric strains are computed in accordance with \eqref{satisfy1} and effective coarse scale bulk moduli are computed in accordance with \eqref{satisfy3}, the Cauchy-Schwartz inequality guarantees the satisfaction of the condition \eqref{condition2}.
\end{proof}
\section{The Voigt bound, the Reuss bound and the contraction constant}
The contraction constant is given by
\begin{align}
\nonumber
\gamma =\max\limits_{E^f\in \mathscr{T}_h^f}\bigg(\frac{\alpha_{E^f}^2}{\frac{\eta_{E^f}}{M_{E^f}}+\alpha_{E^f}^2}\bigg)<1
\end{align}
It is clear to see that the minimum value of contraction constant is obtained when the adjustable parameter takes the maximum possible value. To interrogate the maximum value that the adjustable parameter can achieve, we look at the third condition for the satisfaction of the contractivity given by 
\begin{align*}
&\eta_{E^p}\leq 2K_{b_{E^p}} \qquad (\forall E^p\in \mathscr{T}_H^p)
\end{align*}
It is clear when $\eta=2K_b$, we obtain the minimum contraction constant thus implying fastest convergence of the staggered solution algorithm. The expression \eqref{satisfy4} for the coarse scale moduli in terms on fine scale data is given by
\begin{align}
\label{randomass}
\frac{1}{\eta_{E^p}}
= \sum\limits_{E^f\in \mathcal{I}^{E^p}}\frac{1}{\eta_{E^f}}\frac{Meas(E^f)}{Meas(E^p)}\qquad (\forall\,\,E^p\in \mathscr{T}_H^p)
\end{align}
The following cases arise
\begin{itemize}
\item The adjustable parameter is equal to twice the drained bulk modulus i.e. $\eta\equiv 2K_b$
\begin{align}
\nonumber
&\frac{1}{2K_{b_{E^p}}}
= \sum\limits_{E^f\in \mathcal{I}^{E^p}}\frac{1}{2K_{b_{E^f}}}\frac{Meas(E^f)}{Meas(E^p)}\qquad (\forall\,\,E^p\in \mathscr{T}_H^p)\\
\nonumber
&\implies \frac{1}{K_{b_{E^p}}}
= \sum\limits_{E^f\in \mathcal{I}^{E^p}}\frac{1}{K_{b_{E^f}}}\frac{Meas(E^f)}{Meas(E^p)}\qquad (\forall\,\,E^p\in \mathscr{T}_H^p)
\end{align}
In this case, the coarse scale bulk moduli are harmonic mean of the fine scale data, thus representing the Reuss bound
\item The adjustable parameter is equal to inverse of the drained bulk modulus i.e. $\eta\equiv \frac{1}{K_b}$
\begin{align}
\nonumber
K_{b_{E^p}}
= \sum\limits_{E^f\in \mathcal{I}^{E^p}}K_{b_{E^f}}\frac{Meas(E^f)}{Meas(E^p)}\qquad (\forall\,\,E^p\in \mathscr{T}_H^p)
\end{align}
In this case, the coarse scale bulk moduli are arithmetic mean of the fine scale data, thus representing the Voigt bound
\end{itemize}
We already know that the Reuss and Voigt bounds on effective moduli yield the lower and upper bounds for the elastic strain energy for multiphase composites respectively (see \cite{saeb-2016}). In lieu of that, we state that the adjustable parameter is bounded above by the drained bulk modulus and below by the inverse of bulk modulus as follows
\begin{align*}
\frac{1}{K_b}\leq \eta \leq 2K_b
\end{align*}
\section{Conclusions and outlook}
The link we established between the measure of the mean stress used in the decoupling constraint and the Voigt and Reuss bounds has interesting connotations for the imposed homogeneous boundary conditions used to arrive at effective properties in the computational homogenization of multiphase composites. We know that stress uniform boundary conditions on the mesoscale lead to the Reuss bound on the effective property at the macroscale while the kinematic uniform boundary conditions on the mesoscale lead to the Voigt bound on the effective property at the macroscale (\citet{hashin-1962}, \citet{hill-1963}, \citet{hillmandel1}, \citet{hill-1972}, \citet{hashin-1983}, \citet{zohdi}). We also know that periodic boundary conditions on the mesoscale lead to the most accurate effective properties at the macroscale. In case of the two-grid approach, the fine scale flow grid is the mesoscale while the coarse scale poromechanical grid is the macroscale. When the adjustable parameter takes upon the value of twice the drained bulk modulus, we obtain the Reuss bound corresponding to stress uniform boundary conditions on the mesoscale. Similarly, when the adjustable parameter takes upon the value of the inverse of the drained bulk modulus, we obtain the Voigt bound corresponding to kinematic uniform boundary conditions on the mesoscale. By an extension of that logic, we expect a certain value of the adjustable parameter that corresponds to the periodic boundary conditions imposed on the mesoscale thereby lending itself to the most accurate estimate of the macroscale effective property, and thereby lending itself to the fastest convergence of the two-grid staggered solution algorithm.
\appendix
\section{Discrete variational statements for the flow subproblem in terms of coupling iteration differences}\label{discreteflow}
Before arriving at the discrete variational statement of the flow model, we impose the decoupling constraint on the strong form of the mass conservation equation \eqref{massagain1}. Invoking the relation $\bar{\sigma}=\eta \bar{\epsilon}-\alpha p$, we get
\begin{align}
\nonumber
&\frac{\partial}{\partial t}\bigg(\frac{1}{M}p+\alpha \bigg(\frac{\bar{\sigma}+\alpha p}{\eta}\bigg) \bigg)+\nabla \cdot \mathbf{z}=q\\
\label{ek}
&\overbrace{\bigg(\frac{1}{M}+\frac{\alpha^2}{\eta}\bigg)}^{\varphi}\frac{\partial p}{\partial t} +\nabla \cdot \mathbf{z}=q-\frac{\alpha}{\eta}\frac{\partial \bar{\sigma}}{\partial t}
\end{align}
Using backward Euler in time, the discrete in time form of \eqref{ek} for the $m^{th}$ coupling iteration in the $(n+1)^{th}$ time step is written as
\begin{align*}
&\varphi\frac{1}{\Delta t}(p^{m,n+1}-p^n) +\nabla \cdot \mathbf{z}^{n+1}=q^{n+1}-\frac{\alpha}{\eta}\frac{1}{\Delta t}(\bar{\sigma}^{m,n+1}-\bar{\sigma}^n)
\end{align*}
where $\Delta t$ is the time step and the source term as well as the terms evaluated at the previous time level $n$ do not depend on the coupling iteration count as they are known quantities. The decoupling constraint implies that $\bar{\sigma}^{m,n+1}$ gets replaced by $\bar{\sigma}^{m-1,n+1}$ i.e. the computation of $p^{m,n+1}$ and $\mathbf{z}^{m,n+1}$ is based on the value of $\bar{\sigma}$ updated after the poromechanics solve from the previous coupling iteration $m-1$ at the current time level $n+1$. The modified equation is written as
\begin{align}
\nonumber
&\varphi(p^{m,n+1}-p^n)+\Delta t\nabla \cdot \mathbf{z}^{m,n+1}=\Delta t q^{n+1}-\frac{\alpha}{\eta}(\bar{\sigma}^{m-1,n+1}-\bar{\sigma}^n)
\end{align}
As a result, the discrete variational statement of \eqref{massagain1} in the presence of medium heterogeneities is
\begin{align}
\nonumber
&\sum\limits_{E^f\in \mathscr{T}_h^f}\varphi_{E^f}(p_h^{m,n+1}-p_h^n,\theta_h)_{E^f}+\sum\limits_{E^f\in \mathscr{T}_h^f}\Delta t(\nabla \cdot \mathbf{z}_h^{m,n+1},\theta_h)_{E^f}\\
\label{wone}
&=\sum\limits_{E^f\in \mathscr{T}_h^f}\Delta t(q^{n+1},\theta_h)_{E^f}-\sum\limits_{E^f\in \mathscr{T}_h^f}\frac{\alpha_{E^f}}{\eta_{E^f}}
(\bar{\sigma}^{m-1,n+1}-\bar{\sigma}^n,\theta_h)_{E^f}
\end{align}
Replacing $m$ by $m+1$ in \eqref{wone} and subtracting the two equations, we get 
\begin{align*}
&\sum\limits_{E^f\in \mathscr{T}_h^f}\varphi_{E^f}(\delta^{(m)} p_h,\theta_h)_{E^f}+\sum\limits_{E^f\in \mathscr{T}_h^f}\Delta t(\nabla \cdot \delta^{(m)} \mathbf{z}_h,\theta_h)_{E^f}=-\sum\limits_{E^f\in \mathscr{T}_h^f}\frac{\alpha_{E^f}}{\eta_{E^f}}(\delta^{(m-1)} \bar{\sigma},\theta_h)_{E^f}
\end{align*}
The weak form of the Darcy law \eqref{darcyagain1} for the $m^{th}$ coupling iteration in the $(n+1)^{th}$ time step is
\begin{align}
\label{wtwo1}
(\boldsymbol{\kappa}^{-1}\mathbf{z}^{m,n+1},\mathbf{v})_{\Omega}=-(\nabla p^{m,n+1},\mathbf{v})_{\Omega}+(\rho_0 \mathbf{g},\mathbf{v})_{\Omega}\qquad \forall\,\,\mathbf{v}\in \mathbf{V}(\Omega)
\end{align}
where $\mathbf{V}(\Omega)$ is given by
\begin{align*}
\mathbf{V}(\Omega)\equiv \mathbf{H}(div,\Omega)\cap \big\{\mathbf{v}:\mathbf{v}\cdot \mathbf{n}=0\,\,\mathrm{on}\,\,\Gamma_N^f\big\}
\end{align*}
and $\mathbf{H}(div,\Omega)$ is given by 
\begin{align*}
\mathbf{H}(div,\Omega)\equiv\big\{\mathbf{v}:\mathbf{v}\in (L^2(\Omega))^3,\nabla \cdot \mathbf{v}\in L^2(\Omega) \big\}
\end{align*}
We use the divergence theorem to evaluate the first term on RHS of \eqref{wtwo1} as follows
\begin{align}
\nonumber
&(\nabla p^{m,n+1},\mathbf{v})_{\Omega}=(\nabla,p^{m,n+1}\mathbf{v})_{\Omega}-(p^{m,n+1},\nabla \cdot \mathbf{v})_{\Omega}\\
\label{wtwo2}
&=(p^{m,n+1},\mathbf{v}\cdot \mathbf{n})_{\partial \Omega} -(p^{m,n+1},\nabla \cdot \mathbf{v})_{\Omega}=(g,\mathbf{v}\cdot \mathbf{n})_{\Gamma_D^f} -(p^{m,n+1},\nabla \cdot \mathbf{v})_{\Omega}
\end{align}
where we invoke $\mathbf{v}\cdot \mathbf{n}=0$ on $\Gamma_N^f$.
In lieu of \eqref{wtwo1} and \eqref{wtwo2}, we get
\begin{align*}
(\boldsymbol{\kappa}^{-1}\mathbf{z}^{m,n+1},\mathbf{v})_{\Omega}=-(g,\mathbf{v}\cdot \mathbf{n})_{\Gamma_D^f}+(p^{m,n+1},\nabla \cdot \mathbf{v})_{\Omega}+(\rho_0 \mathbf{g},\mathbf{v})_{\Omega}
\end{align*}
As a result, the discrete variational statement of \eqref{darcyagain1} in the presence of medium heterogeneities is
\begin{align}
\nonumber
&\sum\limits_{E^f\in \mathscr{T}_h^f}
(\boldsymbol{\kappa}_{E^f}^{-1}\mathbf{z}_h^{m,n+1},\mathbf{v}_h)_{E^f}-\sum\limits_{E^f\in \mathscr{T}_h^f} (p_h^{m,n+1},\nabla \cdot \mathbf{v}_h)_{E^f}\\
\label{wtwo}
&=\sum\limits_{E^f\in \mathscr{T}_h^f}(\rho_0 \mathbf{g}, \mathbf{v}_h)_{E^f}-\sum\limits_{E^f\in \mathscr{T}_h^f}(g,\mathbf{v}_h\cdot \mathbf{n})_{\partial E^f\cap \Gamma_D^f}
\end{align}
Replacing $m$ by $m+1$ in \eqref{wtwo} and subtracting the two equations, we get
\begin{align}
\nonumber
&\sum\limits_{E^f\in \mathscr{T}_h^f}
(\boldsymbol{\kappa}_{E^f}^{-1}\delta^{(m)} \mathbf{z}_h, \mathbf{v}_h)_{E^f}=\sum\limits_{E^f\in \mathscr{T}_h^f}(\delta^{(m)} p_h,\nabla \cdot \mathbf{v}_h)_{E^f}
\end{align} 
\section{Discrete variational statement for the poromechanics subproblem in terms of coupling iteration differences}\label{discretemechanics}
The weak form of the linear momentum balance \eqref{mechstart} is given by
\begin{align}
\label{app1}
(\nabla \cdot \boldsymbol{\sigma},\mathbf{q})_{\Omega}+(\mathbf{f}\cdot \mathbf{q})_{\Omega}=0\qquad (\forall\,\,\mathbf{q}\in \mathbf{U}(\Omega))
\end{align}
where $\mathbf{U}(\Omega)$ is given by
\begin{align*}
\mathbf{U}(\Omega)\equiv \big\{\mathbf{q}=(u,v,w):u,v,w\in H^1(\Omega),\mathbf{q}=\mathbf{0}\,\,\mathrm{on}\,\,\Gamma_D^p\big\}
\end{align*}
where $H^m(\Omega)$ is defined, in general, for any integer $m\geq 0$ as
\begin{align*}
H^m(\Omega)\equiv\big\{w:D^{\alpha}w\in L^2(\Omega)\,\,\forall |\alpha| \leq m \big\},
\end{align*}
where the derivatives are taken in the sense of distributions and given by
\begin{align*}
D^{\alpha}w=\frac{\partial^{|\alpha|}w}{\partial x_1^{\alpha_1}..\partial x_n^{\alpha_n}},\,\,|\alpha|=\alpha_1+\cdots+\alpha_n,
\end{align*} 
We know from tensor calculus that
\begin{align}
\label{app2}
(\nabla \cdot \boldsymbol{\sigma},\mathbf{q})\equiv (\nabla ,\boldsymbol{\sigma}\mathbf{q})-(\boldsymbol{\sigma},\nabla \mathbf{q})
\end{align}
Further, using the divergence theorem and the symmetry of $\boldsymbol{\sigma}$, we arrive at
\begin{align}
\label{app3}
(\nabla ,\boldsymbol{\sigma}\mathbf{q})_{\Omega}\equiv (\mathbf{q},\boldsymbol{\sigma}\mathbf{n})_{\partial \Omega}
\end{align}
We decompose $\nabla \mathbf{q}$ into a symmetric part $(\nabla \mathbf{q})_{s}\equiv \frac{1}{2}\big(\nabla \mathbf{q}+(\nabla \mathbf{q})^T\big)\equiv \boldsymbol{\epsilon}(\mathbf{q})$ and skew-symmetric part $(\nabla \mathbf{q})_{ss}$ and note that the contraction between a symmetric and skew-symmetric tensor is zero to obtain
\begin{align}
\label{app4}
\boldsymbol{\sigma}:\nabla \mathbf{q}\equiv \boldsymbol{\sigma}:(\nabla \mathbf{q})_{s}+\cancelto{0}{\boldsymbol{\sigma}:(\nabla \mathbf{q})_{ss}}=\boldsymbol{\sigma}:\boldsymbol{\epsilon}(\mathbf{q})
\end{align}
From \eqref{app1}, \eqref{app2}, \eqref{app3} and \eqref{app4}, we get
\begin{align*}
&(\boldsymbol{\sigma}\mathbf{n},\mathbf{q})_{\partial \Omega} - (\boldsymbol{\sigma},\boldsymbol{\epsilon}(\mathbf{q}))_{\Omega} + (\mathbf{f},\mathbf{q})_{\Omega}=0
\end{align*}
which, after invoking the boundary condition $\boldsymbol{\sigma}\mathbf{n}=\mathbf{t}$ on $\Gamma_N^p$ results in
\begin{align}
\label{app5}
&(\mathbf{t},\mathbf{q})_{\Gamma_N^p} - (\boldsymbol{\sigma},\boldsymbol{\epsilon}(\mathbf{q}))_{\Omega} + (\mathbf{f},\mathbf{q})_{\Omega}=0
\end{align}
The stress tensor $\boldsymbol{\sigma}$ and strain tensor $\boldsymbol{\epsilon}(\mathbf{q})$ are written as
\begin{align*}
&\boldsymbol{\sigma}=\boldsymbol{s}+\frac{1}{3}tr(\boldsymbol{\sigma})\mathbf{I}=\boldsymbol{s}
+\sigma_v\mathbf{I};\qquad \boldsymbol{\epsilon}(\mathbf{q})=\boldsymbol{e}(\mathbf{q})+\frac{1}{3}tr(\boldsymbol{\epsilon}(\mathbf{q}))\mathbf{I}=\boldsymbol{e}(\mathbf{q})+\frac{1}{3}\bar{\epsilon}(\mathbf{q})\mathbf{I}
\end{align*}
where $\boldsymbol{s}$ is the deviatoric stress tensor, $\boldsymbol{e}(\mathbf{q})$ is the deviatoric strain tensor and $\sigma_v$ is the mean stress. Using the above relations, we can write
\begin{align}
\nonumber
&\boldsymbol{\sigma}:\boldsymbol{\epsilon}(\mathbf{q})=
\big(\boldsymbol{s}
+\sigma_v\mathbf{I}\big):\big(\boldsymbol{e}(\mathbf{q})+\frac{1}{3}\bar{\epsilon}(\mathbf{q})\mathbf{I}\big)=\boldsymbol{s}:\boldsymbol{e}(\mathbf{q})
+\boldsymbol{s}:\frac{1}{3}\bar{\epsilon}(\mathbf{q})\mathbf{I}
+\sigma_v\mathbf{I}:\boldsymbol{e}(\mathbf{q})
+\sigma_v\mathbf{I}:\frac{1}{3}\bar{\epsilon}(\mathbf{q})\mathbf{I}\\
\label{app7}
&=\boldsymbol{s}:\boldsymbol{e}(\mathbf{q})
+\frac{1}{3}\bar{\epsilon}(\mathbf{q})\cancelto{0}{tr(\boldsymbol{s})}
+\sigma_v \cancelto{0}{tr(\boldsymbol{e}(\mathbf{q}))}
+3\sigma_v\frac{1}{3}\bar{\epsilon}(\mathbf{q})=\boldsymbol{s}:\boldsymbol{e}(\mathbf{q})+\sigma_v\bar{\epsilon}(\mathbf{q})
\end{align}
where we note that the contraction of any second order tensor with the identity tensor $\mathbf{I}$ is equal to the trace of the tensor and further, the trace of a deviatoric tensor is zero resulting in $tr(\boldsymbol{s})=0$ and $tr(\boldsymbol{e}(\mathbf{q}))=0$.
Substituting \eqref{app7} in \eqref{app5}, we get
\begin{align}
\label{app8}
&(\mathbf{t},\mathbf{q})_{\Gamma_N^p}-(\mathbf{s},\mathbf{e}(\mathbf{q}))_{\Omega} - (\sigma_v,\bar{\epsilon}(\mathbf{q}))_{\Omega} + (\mathbf{f},\mathbf{q})_{\Omega}=0
\end{align}
The deviatoric strain tensor is obtained as
\begin{align}
\nonumber
\boldsymbol{s}&=\boldsymbol{\sigma}-\frac{1}{3}tr(\boldsymbol{\sigma})\mathbf{I}=\boldsymbol{\sigma}_0
+\lambda \bar{\epsilon}\mathbf{I}+2G\boldsymbol{\epsilon}
-\alpha (p-p_0)\mathbf{I}-\frac{1}{3}tr\big(\boldsymbol{\sigma}_0
+\lambda \bar{\epsilon}\mathbf{I}+2G\boldsymbol{\epsilon}
-\alpha (p-p_0)\mathbf{I}\big)\mathbf{I}\\
\label{app11}
&=\mathbf{s}_0+2G\big(\boldsymbol{\epsilon}-\frac{1}{3}tr(\boldsymbol{\epsilon})\mathbf{I}\big)
=\mathbf{s}_0+2G\boldsymbol{e}
\end{align}
Substituting \eqref{app11} in \eqref{app8}, we get
\begin{align}
\nonumber
&(\mathbf{t},\mathbf{q})_{\Gamma_N^p}-(\mathbf{s}_0,\mathbf{e}(\mathbf{q}))_{\Omega}-(2G\mathbf{e},\mathbf{e}(\mathbf{q}))_{\Omega} - (\sigma_v,\bar{\epsilon}(\mathbf{q}))_{\Omega} + (\mathbf{f},\mathbf{q})_{\Omega}=0
\end{align}
As a result, the discrete variational statement of the linear momentum balance \eqref{mechstart} for the $m^{th}$ coupling iteration in the $(n+1)^{th}$ time step in the presence of medium heterogeneities is written as 
\begin{align}
\nonumber
&\sum\limits_{E^p\in \mathscr{T}_H^p}2G_{E^p}(\boldsymbol{e}(\mathbf{u}_H^{m,n+1}),\boldsymbol{e}(\mathbf{q}_H))_{E^p}
+\sum\limits_{E^p\in \mathscr{T}_H^p}
(\sigma_v^{m,n+1},\nabla \cdot \mathbf{q}_H)_{E^p}\\
\label{wthree}
&=\sum\limits_{E^p\in \mathscr{T}_H^p}(\mathbf{f},\mathbf{q}_H)+\sum\limits_{E^p\in \mathscr{T}_H^p}(\mathbf{t},\mathbf{q}_H)_{\partial E^p\cap \Gamma_N^p}-\sum\limits_{E^p\in \mathscr{T}_H^p}(\boldsymbol{s}_0,\boldsymbol{e}(\mathbf{q}_H))_{E^p}
\end{align}
Replacing $m$ by $m+1$ in \eqref{wthree} and subtracting the two equations, we get
\begin{align}
\nonumber
&\sum\limits_{E^p\in \mathscr{T}_H^p} 2G_{E^p}(\boldsymbol{e}(\delta^{(m)} \mathbf{u}_H),\boldsymbol{e}(\mathbf{q}_H))_{E^p}+\sum\limits_{E^p\in \mathscr{T}_H^p}(\delta^{(m)} \sigma_v,\nabla \cdot \mathbf{q}_H)_{E^p}=0
\end{align}
\bibliographystyle{plainnat} 
\bibliography{diss}
\end{document}